\renewcommand\subsubsection{\@secnumfont}{\bfseries\itshape}%
\renewcommand\subsubsection{\@startsection{subsubsection}{3}
	\z@{.5\linespacing\@plus.7\linespacing}{-.5em}%
	{\normalfont\bfseries\itshape}}
\patchcmd{\@setaddresses}{\indent}{\noindent}{}{}
\patchcmd{\@setaddresses}{\indent}{\noindent}{}{}
\patchcmd{\@setaddresses}{\indent}{\noindent}{}{}
\patchcmd{\@setaddresses}{\indent}{\noindent}{}{}
\newcommand*\circled[1]{\tikz[baseline=(char.base)]{
		\node[shape=circle,draw,inner sep=2pt] (char) {#1};}}
\DeclareRobustCommand*{\bfseries}{%
	\not@math@alphabet\bfseries\mathbf
	\fontseries\bfdefault\selectfont
	\boldmath
}
\newcommandx{\change}[2][1=]{\todo[linecolor=red,backgroundcolor=red!25,bordercolor=red,#1]{#2}}
\newcommandx{\unsure}[2][1=]{\todo[linecolor=blue,backgroundcolor=blue!25,bordercolor=blue,#1]{#2}}
\newcommandx{\info}[2][1=]{\todo[linecolor=OliveGreen,backgroundcolor=OliveGreen!25,bordercolor=OliveGreen,#1]{#2}}
\newcommandx{\improvement}[2][1=]{\todo[linecolor=Plum,backgroundcolor=Plum!25,bordercolor=Plum,#1]{#2}}
\newcommandx{\thiswillnotshow}[2][1=]{\todo[disable,#1]{#2}}
\providecommand\@dotsep{5}
\renewcommand{\listoftodos}[1][\@todonotes@todolistname]{%
	\@starttoc{tdo}{#1}}
\numberwithin{equation}{section} 
\newtheorem{theorem}[equation]{Theorem}
\newtheorem*{thm:main}{Main Theorem}
\newtheorem{fact}[equation]{Fact}
\newtheorem{proposition}[equation]{Proposition}
\newtheorem{lemma}[equation]{Lemma}
\newtheorem{corollary}[equation]{Corollary}
\newtheorem*{warning}{Warning}
\theoremstyle{definition}
\newtheorem{definition}[equation]{Definition}
\newtheorem{remark}[equation]{Remark}
\newtheorem*{remark*}{Remark}
\newtheorem*{question}{Question}
\newcommand{\ZH}{\zz \lbrack H \rbrack}
\newcommand{\bez}{-}
\newcommand{\zz}{\mathbb{Z}}
\newcommand{\bigslant}[2]{{\raisebox{.2em}{$#1$}\left/\raisebox{-.2em}{$#2$}\right.}}
\newcommand{\Mab}{M^{fab}}
\newcommand{\Vab}{\mathcal{V}^{fab}}
\newcommand{\V}{\mathcal{V}}
\newcommand{\B}{\mathcal{B}}
\newcommand{\T}{\mathcal{T}}
\newcommand{\D}{\mathcal{D}}
\algrenewcommand{\algorithmiccomment}[1]{\hspace*{\fill}
	\color{gray}\small $\#$  #1 \color{black}\normalsize}
\def\maketitle{\par
	\begingroup
	\def\thefootnote{\fnsymbol{footnote}}%
	\setcounter{footnote}\z@
	\def\@makefnmark{\hbox to\z@{$\m@th^{\@thefnmark}$\hss}}%
	\long\def\@makefntext##1{\noindent
		\ifnum\c@footnote>\z@\relax
		\hbox to1.8em{\hss$\m@th^{\@thefnmark}$}##1%
		\else
		\hbox to1.8em{\hfill}%
		\parbox{\dimexpr\linewidth-1.8em}{\raggedright ##1}%
		\fi}
	\if@twocolumn\twocolumn[\@maketitle]%
	\else\newpage\global\@topnum\z@\@maketitle\fi
	\thispagestyle{titlepage}\@thanks\endgroup
	\setcounter{footnote}\z@
	\gdef\@date{\today}\gdef\@thanks{}%
	\gdef\@author{}\gdef\@title{}}
\begin{document}
	\hypersetup{%
		,urlcolor=black
		,citecolor=black
		,linkcolor=black
	}

	\numberwithin{equation}{section}
	\title{Arbitrarily large veering triangulations with \\ a vanishing taut polynomial}
	
	\author{Anna Parlak}
	\address{Mathematical Sciences Building \\ University of California, One Shields Avenue, Davis, CA 95616, United States}
	\email{anna.parlak@gmail.com}

	\keywords{veering triangulations, pseudo-Anosov flows, taut polynomial, twisted Alexander polynomial} 
	\subjclass[2020]{Primary 57K31; Secondary  37D20, 57K32}

	\begin{abstract}
	Landry, Minsky, and Taylor introduced an invariant of veering triangulations called the taut polynomial. Via a connection between veering triangulations and pseudo-Anosov flows, it generalizes the Teichm\"uller polynomial of a fibered face of the Thurston norm ball to (some) non-fibered faces. We construct a sequence of veering triangulations, with the number of tetrahedra tending to infinity, whose taut polynomials vanish. These veering triangulations encode non-circular Anosov flows transverse to tori.
	\end{abstract}
	
	\maketitle%
	\setcounter{tocdepth}{1}
	
	\tableofcontents

	\section{Introduction}
	Veering triangulations were introduced by Ian Agol in \cite{Agol_veer} as a way to canonically triangulate certain pseudo-Anosov mapping tori. It quickly became evident that veering triangulations exist also on non-fibered 3-manifolds \cite[Section 4]{veer_strict-angles}. In unpublished work Agol and Gu\'eritaud showed that a veering triangulation can be constructed from a pair $(\Psi, \Lambda)$, where $\Psi$ is a pseudo-Anosov flow on a closed 3-manifold, and $\Lambda$ is a finite, nonempty collection of closed orbits of $\Psi$ containing all singular orbits of $\Psi$ and such that $\Psi$ does not have \emph{perfect fits} relative to $\Lambda$; see \cite[Section 4]{LMT_flow}. The original fibered setup discussed in \cite{Agol_veer} is just a special case in which $\Psi$ is the suspension flow of a pseudo-Anosov homeomorphism, and~$\Lambda$ consists only of the sigular orbits of~$\Psi$. 
	
	The connection between veering triangulations and pseudo-Anosov flows has been explored further by Agol-Tsang \cite{Tsang-Agol}, Landry-Minsky-Taylor \cite{LMT_flow}, and Schleimer-Segerman \cite{SchleimSeg-dynamic-pairs, SchleimSegLinks}. From their work it follows that veering triangulations can be viewed as combinatorial tools to study pseudo-Anosov flows. They have numerous advantages: first, as finite combinatorial objects they are often easier to analyze than the flows directly. In particular, veering triangulations simplify various computations of flow invariants; see for instance \cite{Parlak-computation}.  Second, veering triangulations can be rigorously classified and catalogued. This feature prompted  the creation of the  Veering Census~\cite{VeeringCensus} which contains information on all veering triangulations with up to 16 tetrahedra. Third, a veering triangulation constructed from a pair $(\Psi, \Lambda)$ is canonical for that pair. This makes veering triangulations superior to other combinatorial methods used to study pseudo-Anosov flows, such as Markov partitions and branched surfaces. Finally, thanks to veering triangulations one can study flows experimentally using already existing  software to study triangulations, such as Regina~\cite{regina} and SnapPy~\cite{snappea}.
	
	New results about pseudo-Anosov flows that were proved using veering triangulations include the construction of Birkhoff sections for pseudo-Anosov flows with the complexity bounded from above \cite[Theorem 6.3]{Tsang-Birkhoff}, and the fact that a top-dimensional non-fibered face of the Thurston norm ball can be dynamically represented by multiple topologically inequivalent flows \cite[Theorem 5.2]{Parlak-mutations}.
	
	\subsection{The main result} In this paper we do not use veering triangulations to prove new facts about pseudo-Anosov flows. Instead, we focus on veering triangulations themselves. In \cite[Section~3]{LMT} Landry-Minsky-Taylor introduced a polynomial invariant of veering triangulations called the \emph{taut polynomial}. It carries information about the dynamical properties of the flow encoded by the veering triangulation; see \cite[Section~7]{LMT} for the fibered case and \cite[Section 7]{LMT_flow} for the non-fibered case. 
		Interestingly, the taut polynomial of a veering triangulation which encodes a non-circular pseudo-Anosov flow can vanish. In fact, we prove the following theorem.

	\begin{thm:main}
		For any integer $k\geq0$ there is a veering triangulation $\V_k$ with $8+k$ tetrahedra whose taut polynomial vanishes.
	\end{thm:main}

By a direct computation of the taut polynomials of all veering triangulations with less than 8 tetrahedra it can be checked that 8 is the minimum number of tetrahedra that a veering triangulation with a vanishing taut polynomial can have.

To construct veering triangulations $(\V_k)_{k\geq 0}$ from the Main Theorem we use \emph{vertical surgeries} introduced by Tsang in \cite[Section 4]{Tsang-branched-surfaces}. Topologically, vertical surgeries are just $(1,k)$-Dehn surgeries along certain loops embedded in the 3-manifold underlying a veering triangulation. The loops are chosen so that the surgered 3-manifold admits a veering triangulation closely related to the original one; see Section \ref{sec:vertical:surgery}. To prove the Main Theorem we  perform vertical surgeries on some veering triangulation~$\V_0$  whose taut polynomial  vanishes, and show that all surgered triangulations also have a vanishing taut polynomial; see Proposition \ref{prop:taut:vanishes}. We discuss the flows encoded by $(\V_k)_{k\geq 0}$ in Section \ref{sec:the:flows}.

 The sequence $(\V_k)_{k \geq 0}$ from the Main Theorem is not unique. In fact, starting from the same veering triangulation $\V_0$ one can construct a different sequence of veering triangulations with vanishing taut polynomials by choosing a different vertical surgery curve; see Remark \ref{rem:other:sequence}. 


The stable branched surfaces of veering triangulations $(\V_k)_{k\geq 0}$ from the Main Theorem are not orientable. However, lifting every $\V_k$ to the double cover where its stable branched surface becomes orientable yields an infinite sequence $(\widehat{\V}_k)_{k\geq 0}$  of veering triangulations with the number of tetrahedra tending to infinity, orientable stable branched surfaces, and a vanishing taut polynomial; see Remark~\ref{rem:double:covers}. By \cite[Theorem 5.7]{parlak-alex}, the Alexander polynomials of manifolds underlying $(\widehat{\V}_k)_{k\geq 0}$ also vanish. The Alexander polynomials of manifolds underlying $(\V_k)_{k\geq 0}$ do not vanish.

	\subsection{Vanishing veering polynomials}
 In \cite[Section~3]{LMT} Landry-Minsky-Taylor introduced another polynomial invariant of veering triangulations called the \emph{veering polynomial}. Since the taut polynomial of a veering triangulation~$\V$ is a factor of the veering polynomial of~$\V$ \cite[Theorem 6.1 and Remark~6.18]{LMT}, the veering polynomials of veering triangulations~$(\V_k)_{k \geq 0}$ from the Main Theorem also vanish. That is, we get the following corollary.

\begin{corollary}
	For any integer $k\geq0$ there is a veering triangulation $\V_k$ with $8+k$ tetrahedra whose veering polynomial vanishes.\qed
\end{corollary}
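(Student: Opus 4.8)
The corollary in question is an immediate consequence of the Main Theorem combined with the cited divisibility result. Let me sketch the (very short) proof.

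\bigskip

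The proof is a one-line deduction from the Main Theorem and the structural relationship between the two polynomial invariants. The plan is as follows. First, invoke the Main Theorem to fix, for each integer $k \geq 0$, the veering triangulation $\V_k$ with $8+k$ tetrahedra whose taut polynomial vanishes. Next, recall from \cite[Theorem 6.1 and Remark~6.18]{LMT} that for any veering triangulation~$\V$ the taut polynomial of~$\V$ divides the veering polynomial of~$\V$; equivalently, the veering polynomial is a multiple of the taut polynomial in the appropriate Laurent polynomial ring (the group ring of the relevant first homology, or a maximal abelian cover variant thereof). Finally, observe that a multiple of the zero element is zero: since the taut polynomial of $\V_k$ is~$0$, its veering polynomial must also be~$0$. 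This proves the corollary, with the same sequence $(\V_k)_{k \geq 0}$ witnessing the statement.

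\bigskip

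There is no real obstacle here — the only point requiring any care is making sure the divisibility statement of \cite{LMT} is applied in the correct ring, so that "taut divides veering" literally gives "taut $= 0 \implies$ veering $= 0$". Since both polynomials live in (a localization or group-ring completion of) $\Zring{H_1}$ of the manifold, or its maximal free abelian cover, and the divisibility in \cite[Theorem 6.1]{LMT} is an honest divisibility of elements there, this is automatic. One should note that the converse fails in general — a vanishing veering polynomial does not force a vanishing taut polynomial — but that direction is not needed. Hence the corollary follows directly, and indeed the same triangulations $\V_k$ already constructed for the Main Theorem serve as witnesses without any additional work; the only modification to the statement is replacing "taut polynomial" by "veering polynomial."
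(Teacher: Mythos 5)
Your proposal is correct and is exactly the paper's argument: the paper deduces the corollary in one line from the Main Theorem together with the fact that the taut polynomial is a factor of the veering polynomial \cite[Theorem 6.1 and Remark~6.18]{LMT}, so a vanishing taut polynomial forces a vanishing veering polynomial for the same triangulations $(\V_k)_{k\geq 0}$.
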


	\subsection{Connection to the finiteness conjecture}
	
	Let $\V$ be a finite veering triangulation of a 3-manifold $M$. In \cite[Theorem 5.7]{parlak-alex} the author showed that the taut polynomial of $\V$  is equal to the twisted Alexander polynomial $\Delta_M^{\omega \otimes \pi},$ where \linebreak $\omega: \pi_1(M)\rightarrow \lbrace -1, 1 \rbrace$ is a certain homomorphism determined by $\V$ (see Section \ref{sec:computation:fox}) and \mbox{$\pi: \pi_1(M) \rightarrow H_1(M;\zz)/\text{torsion}$} is the abelianization and torsion-killing projection. Since there are only $|H_1(M;\zz/2)|$ homomorphisms $\pi_1(M) \rightarrow \zz/2$, it follows that for any 3-manifold $M$ there are only finitely many candidates for the taut polynomial of a veering triangulation of $M$. One could hope that by computing these potential taut polynomials it is possible to obtain some information about the potential veering triangulations of $M$. This would be interesting because it is still not known how to tell whether a given cusped hyperbolic 3-manifold admits a veering triangulation, and if so --- how many. In particular, it is conjectured that any 3-manifold admits only finitely such  triangulations. Therefore bounding from above the number of tetrahedra  of a veering triangulation of $M$ would be of particular interest. Unfortunately, the Main Theorem suggests that the knowledge of all twisted Alexander polynomials $\Delta_M^{\omega \otimes \pi}$, for all homomorphisms  $\omega: \pi_1(M)\rightarrow \zz/2$, might not reveal much information about the maximal size of  a veering triangulation of $M$.
	
	
\subsection*{Acknowledgements}
The author thanks Ian Agol for asking whether infinitely many different veering triangulations can have the same taut polynomial.

	\section{Veering triangulations, the taut polynomial, and vertical surgeries}
	Let $M$ be a compact, oriented 3-manifold. By an \emph{ideal triangulation} of $M$ we mean a decomposition of $M \bez \partial M$ into a collection of finitely many ideal tetrahedra with triangular faces identified in pairs by homeomorphisms which send vertices to vertices. If $\T$ is an ideal triangulation of $M$ then its dual 2-complex is called its \emph{dual spine}. We denote it by $\D$. A vertex of $\D$ that is dual to a tetrahedron $t$ of $\T$ is denoted by $\nu(t)$. An edge of $\D$ that is dual to a face $f$ of $\T$ is denoted by $\epsilon(f)$. A 2-cell of $\D$ that is dual to an edge $e$ of $\T$ is denoted by $\sigma(e)$. 
	The 1-skeleton of the dual spine is called the \emph{dual graph} of $\T$ and denoted by $\Gamma$.
	
	\subsection{Taut triangulations}
	In \cite[Introduction]{Lack_taut} Lackenby introduced a subclass of ideal triangulations that are called \emph{taut}. We define tautness of an ideal triangulation in terms of properties of its dual spine as follows.

\begin{definition}\label{def:taut}
		A \emph{taut structure} $\alpha$ on an ideal triangulation $\T$ is a choice of orientations on the edges of its dual graph $\Gamma$ such that 
		\begin{enumerate}
			\item every vertex $\nu$ of $\Gamma$ has two incoming edges and two outgoing edges,
			\item every 2-cell $\sigma$ of $\D$ has one vertex $b_\sigma$ such that the two edges of $\sigma$ adjacent to~$b_\sigma$ both point out of $b_\sigma$,\nopagebreak[10000]
			\item every 2-cell $\sigma$ of $\D$ has one vertex $t_\sigma$ such that the two edges of~$\sigma$ adjacent to~$t_\sigma$ both point into $t_\sigma$.
		\end{enumerate}
	\end{definition}
	A \emph{taut triangulation} is a pair $(\T, \alpha)$, where $\T$ is an ideal triangulation, and $\alpha$ is a taut structure on $\T$. The vertex $b_\sigma$ from Definition \ref{def:taut} (2) is called the \emph{bottom vertex} of $\sigma$, and the vertex $t_\sigma$ from Definition \ref{def:taut} (2) is called the \emph{top vertex} of $\sigma$. 
	
	\begin{remark}
		Taut triangulations are often called \emph{transverse taut triangulations}; see for instance \cite{Parlak-computation, parlak-alex, SchleimSegLinks}.
	\end{remark}

Under the duality between $\T$ and $\D$, a taut structure $\alpha$ can be seen as a choice of coorientations on 2-dimensional faces of $\T$. If $(\T, \alpha)$ is taut then, by Definition \ref{def:taut}(1), every  tetrahedron $t$ of $\T$ has two faces whose coorientations point into $t$, and two faces whose coorientations point out of $t$. We call the pair of faces whose coorientations point out of $t$ the \emph{top faces} of~$t$ and the pair of faces whose coorientations point into~$t$ the \emph{bottom faces} of $t$. The common edge of the top faces is called the \emph{top diagonal} of $t$, and the common edge of the bottom faces is called the \emph{bottom diagonal} of $t$.
We encode a taut structure on a tetrahedron by drawing it as a quadrilateral with two diagonals --- one on top of the other; see Figure~\ref{fig:taut_tet}. Then the convention is that coorientations on all faces point towards the reader. In other words, we view the tetrahedron \emph{from above}.

\begin{figure}[h]
	\includegraphics[scale=1]{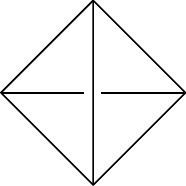}
	\caption{Taut tetrahedron.} 
	\label{fig:taut_tet}
\end{figure}

	\subsection{Veering triangulations} \label{sec:veering:tri}
	Among taut ideal triangulation we distinguish a further subclass of triangulations that are called \emph{veering}. 
\begin{definition}\label{def:veering}
	A \emph{veering structure} on a taut ideal triangulation is a smoothening of its dual spine into a branched surface $\B$ which locally around every vertex looks either as in Figure \ref{fig:veering_branched_surface} (a) or Figure \ref{fig:veering_branched_surface} (b). 

\begin{figure}[h]
	\includegraphics[scale=0.6]{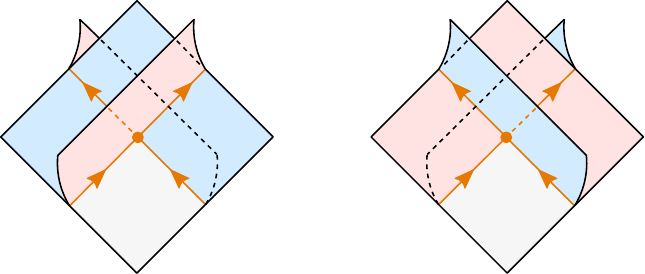}
	\put(-195,0){(a)}
	\put(-90, 0){(b)}
	\caption{A local picture of a neighborhood of a vertex of a veering branched surface. Orientation on the edges of the branch locus is determined by the taut structure. }
	\label{fig:veering_branched_surface}
\end{figure}
A \emph{veering triangulation} is a taut ideal triangulation with a veering structure.
\end{definition} 

We call the branched surface $\B$ from Definition \ref{def:veering} the \emph{stable branched surface} of a veering triangulation or a \emph{veering branched surface}. 
Its branch locus can be identified with the dual graph $\Gamma$ of the triangulation, and it is oriented by the taut structure. 
The 2-cells of $\B$ are called its \emph{sectors}. It follows from the definition of a taut triangulation (Definition \ref{def:taut}) that if $\sigma$ is a sector of $\B$ then $\sigma$ is a disk whose top and bottom vertices partition the boundary $\partial \sigma$ into two oriented arcs. We call these arcs the two \emph{side-arcs} of $\sigma$. 
We say that an edge $\epsilon$ in the boundary of $\sigma$ is its \emph{uppermost edge} if its terminal endpoint is the top vertex of $\sigma$. We refer to all other edges of $\sigma$ as its \emph{lower edges}. Lower edges of $\sigma$ whose initial vertex is the bottom vertex of $\sigma$ are called its \emph{lowermost edges}.

An edge $\epsilon$ of $\B$ is embedded  in the boundary of three (not necessarily distinct) sectors. One sector $\sigma$ in which $\epsilon$ is embedded has the property that  for every other sector~$\sigma'$ in which $\epsilon$ is embedded there is a smooth simple arc which connects an interior point of~$\sigma$ with an interior point of $\sigma'$, and intersects the 1-skeleton of $\B$ in one point contained in the interior of $\epsilon$. We say that~$\sigma$ is on the \emph{one-sheeted side} of $\epsilon$. The remaining two sectors in which~$\epsilon$ is embedded are on the \emph{two-sheeted side} of $\epsilon$. 
For an observer looking from~$\epsilon$ towards the two-sheeted side of $\epsilon$ (with feet at the initial endpoint of $\epsilon$ and eyes at the terminal endpoint of~$\epsilon$) one of the sectors on this side veers off to the right, and the other veers off to the left. We call the former sector the \emph{right sector} of $\epsilon$, and the latter --- the \emph{left sector} of~$\epsilon$. 

By $\epsilon^{-1}$ we denote the edge $\epsilon$ with the orientation opposite to that determined by the taut structure. Given edges $\epsilon_1, \epsilon_2$ of a veering branched surface we say that a pair $\left( \epsilon_1^{\delta_1}, \epsilon_2^{\delta_2}\right)$, where $\delta_j = \pm1$ for $j=1,2$, forms a \emph{turn} if the terminal endpoint of $\epsilon_1^{\delta_1}$ is the initial endpoint of $\epsilon_2^{\delta_2}$. All types of turns are illustrated in Figure \ref{fig:turns}. A turn $\left( \epsilon_1^{\delta_1}, \epsilon_2^{\delta_2}\right)$ is \emph{positively oriented} if $\delta_1 = \delta_2 = 1$; in this case we skip the superscripts and write just $(\epsilon_1, \epsilon_2)$. 
  Using this terminology, below we state important features of a veering branched surface that will be used in Lemmas \ref{lem:colors:on:the:1:sheeted:side} and \ref{lem:vertical:surgery:curves}.

\begin{lemma}\label{lem:direction:well:defined}
	Let $\sigma$ be a sector of a veering branched surface.
	\begin{enumerate}
		\item Suppose that $(\epsilon, \epsilon')$ is a positively oriented turn in the boundary of $\sigma$. Then either $\sigma$ is on the one-sheeted side of $\epsilon'$ and on the two-sheeted side of $\epsilon$, or $\sigma$ is on the two-sheeted side of both $\epsilon, \epsilon'$, and it is the right (left) sector of $\epsilon$ if and only if it is the right (left) sector of $\epsilon'$
		\item Let $\epsilon$ be an edge of $\sigma$. Then $\sigma$ is on the one-sheeted side of $\epsilon$ if and only if $\epsilon$ is an uppermost edge of $\sigma$.
		\item Let $\epsilon, \epsilon'$ be two lowermost edges of $\sigma$. Then $\sigma$ is on the two-sheeted side of both $\epsilon$ and  $\epsilon'$. Furthermore, $\sigma$ is the right (left) sector of $\epsilon$ if and only if it is the right (left) sector of $\epsilon'$. 
		\item Let $\epsilon$ be a lower edge of $\sigma$ which is  not  lowermost. Let $\epsilon'$ be the edge of $\sigma$ whose terminal vertex is the initial vertex of $\epsilon$. Then $\sigma$ is on the two-sheeted side of both $\epsilon$ and $\epsilon'$. Furthermore, $\sigma$ is the right (left) sector of $\epsilon$ if and only if it is the right (left) sector of $\epsilon'$. 
	\end{enumerate}
\end{lemma}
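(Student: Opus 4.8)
The plan is to reduce all four statements to a direct inspection of the two local models of a vertex of a veering branched surface shown in Figure~\ref{fig:veering_branched_surface}. Each of (1)--(4) is symmetric under exchanging the words ``right'' and ``left'', and the two models play symmetric roles in the argument, so it is enough to carry out the verification against one of them.

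First I would record the combinatorial content of the taut structure at a vertex $\nu=\nu(t)$. The tetrahedron $t$ has a top diagonal $e_t$ (the common edge of its two top faces), a bottom diagonal $e_b$ (the common edge of its two bottom faces), and four remaining side edges, each lying in exactly one top face and one bottom face; the four edges of $\Gamma$ at $\nu$ are the $\epsilon(f)$, outgoing for the two top faces and incoming for the two bottom faces, and the corner of $\sigma(e)$ at $\nu$ is the wedge between the two edges $\epsilon(f)$ with $f$ running over the faces of $t$ containing $e$. Using Definition~\ref{def:taut}(2)--(3) one then reads off that $\sigma(e_t)$ is the unique sector corner at $\nu$ having $\nu$ as its bottom vertex, its two edges there being the two outgoing edges at $\nu$; that $\sigma(e_b)$ is the unique sector corner at $\nu$ having $\nu$ as its top vertex, its two edges there being the two incoming edges; and that for each side edge $e$ the vertex $\nu$ lies in the interior of a side-arc of $\sigma(e)$, with the incoming edge of that corner immediately below $\nu$ and the outgoing edge immediately above.

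Next I would read off from the chosen local model, for each sector corner along each of the four edges at $\nu$, whether that sector lies on the one-sheeted or the two-sheeted side of the edge and, in the latter case, whether it is the right or the left sector. The facts I expect to extract are: (a) the sector on the one-sheeted side of an incoming edge at $\nu$ is exactly $\sigma(e_b)$; (b) $\sigma(e_t)$ lies on the two-sheeted side of each of the two outgoing edges at $\nu$ and is the right sector of one of them if and only if it is the right sector of the other; (c) a side-sector $\sigma(e)$ with corner at $\nu$ lies on the two-sheeted side of the incoming edge of that corner, and its right/left label there agrees with its right/left label at the outgoing edge of that corner whenever the latter is defined. Granting (a)--(c), statement (2) follows: letting $\nu$ be the terminal vertex of $\epsilon$ (so $\epsilon$ is incoming at $\nu$), the sector on the one-sheeted side of $\epsilon$ is $\sigma(e_b)$ by (a), and $\sigma(e_b)$ is precisely the unique sector having $\nu=t_\sigma$ as its top vertex, i.e.\ the unique sector in which $\epsilon$ is an uppermost edge. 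For (3), the common initial vertex $\nu$ of the two lowermost edges is $b_\sigma$, so $\sigma=\sigma(e_t)$ and the two edges are the two outgoing edges at $\nu$; each of them is a lower edge, hence on the two-sheeted side of $\sigma$ by (2), and (b) finishes the proof. For (4), the shared vertex of $\epsilon$ and $\epsilon'$ is neither $b_\sigma$ (since $\epsilon$ is not lowermost) nor $t_\sigma$ (since $\epsilon$, being a lower edge, is not uppermost), hence it is an interior vertex of a side-arc of $\sigma$, and $\sigma$ is a side-sector there with $\epsilon'$ below and $\epsilon$ above; both $\epsilon$ and $\epsilon'$ are lower edges ($\epsilon'$ because its terminal vertex is not $t_\sigma$), hence both on the two-sheeted side of $\sigma$ by (2), and (c) gives the right/left consistency. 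For (1), the shared vertex of $\epsilon,\epsilon'$ cannot be $b_\sigma$ (no edge of $\sigma$ ends there) or $t_\sigma$ (no edge of $\sigma$ starts there), so it is again an interior side-arc vertex, with $\epsilon$ below and $\epsilon'$ above; then $\epsilon$ is a lower edge and so on the two-sheeted side by (2); if $\epsilon'$ is the uppermost edge of $\sigma$ it is on the one-sheeted side by (2), which is the first alternative, and otherwise $\epsilon'$ too is a lower edge and we are exactly in the situation of (4), which is the second alternative.

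The only genuinely delicate point is the reading step: one must fix conventions carefully so that ``one-sheeted side'', ``right sector'' and ``left sector'' are extracted unambiguously from Figure~\ref{fig:veering_branched_surface}, and then verify (a)--(c) against it --- in particular the right/left consistencies in (b) and (c), which are the geometric heart of the lemma and are precisely what makes the direction in which a sector veers well defined. Everything else is bookkeeping with the taut structure.
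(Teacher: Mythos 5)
Your proof is correct and takes essentially the same route as the paper: every part is reduced to inspecting the two local models of Figure~\ref{fig:veering_branched_surface} together with combinatorial bookkeeping from the taut structure. The only difference is organizational --- the paper reads parts (1), (3) and the backward half of (2) directly off the figure and then deduces the forward half of (2) and part (4), whereas you package the figure content as the vertex-local facts (a)--(c) and deduce (1) last from (2) and (4); the underlying verification is the same.
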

\begin{proof} \hfill
	\begin{enumerate}
		\item Follows immediately from Figure \ref{fig:veering_branched_surface}.
		\item The backward direction follows immediately from Figure \ref{fig:veering_branched_surface}. Conversely, suppose that $\epsilon$ is not an uppermost edge of $\sigma$. Then there is an edge $\epsilon'$ of~$\sigma$ such that $(\epsilon, \epsilon')$ is a positively oriented turn in the boundary of $\sigma$. Part~(1) then implies that  $\epsilon$ must be on the two-sheeted side of $\sigma$.
		\item Follows immediately from Figure \ref{fig:veering_branched_surface}.
		\item By the assumption, $(\epsilon', \epsilon)$ is a positively oriented turn in the boundary of $\sigma$. Since $\epsilon$ is not an uppermost edge of $\sigma$, the statement follows from parts (1) and (2). \qedhere
	\end{enumerate}
	\end{proof}


Parts (2) and (3) of Lemma \ref{lem:direction:well:defined} together with Definition \ref{def:taut}  imply that a sector~$\sigma$ of a veering branched surface must have at least four edges in its boundary: two lowermost edges and two uppermost edges. Parts (3) and (4) imply that $\sigma$ is either the right sector of all its lower edges, or it is the left sector of all its lower edges. Therefore  we can assign two colors, for instance red and blue, to the sectors of a veering branched surface so that $\sigma$ is red (respectively, blue) if and only if  it is the left  (respectively, right) sector of $\epsilon$ for every lower edge $\epsilon$ of $\sigma$.  Colors on the sectors of $\B$ are  included in Figure \ref{fig:veering_branched_surface}.  The grey sector can be either red or blue; its color cannot be deduced from this local picture. The described coloring of sectors of a veering branched surface is dual to a coloring of edges of a veering triangulation that appears in the
the classical definition of veeringness; see for instance \cite[Definition 5.1]{SchleimSegLinks}.  

	\begin{remark}
	The term \emph{veering branched surface} was introduced by Tsang in~\cite{Tsang-branched-surfaces}. However, the author of \cite{Tsang-branched-surfaces} orients the branch locus of a veering branched surface in the opposite direction.
\end{remark} 

\subsubsection{The Veering Census}\label{sec:census}
Data on veering triangulations of orientable 3-manifolds consisting of up to 16 tetrahedra is available in the Veering Census \cite{VeeringCensus}. A veering triangulation in the census is described by a string of the form
\begin{equation}\label{string}
\texttt{[isoSig]\underline{ }[taut angle structure]}.
\end{equation}
The first part of this string is the isomorphism signature of the triangulation. It identifies a triangulation uniquely up to a combinatorial isomorphism \cite[Section 3]{Burton_isoSig}. The second part of the string records the taut structure, up to reversing orientation on all dual edges. 
A string of the form \eqref{string} is called a \emph{taut signature} and we use it whenever we refer to any particular veering triangulation from the Veering Census.

	\subsection{Vertical surgeries}\label{sec:vertical:surgery}
	In \cite[Section 4]{Tsang-branched-surfaces} Tsang introduced two types of operations that one can perform on a veering branched surface: a \emph{horizontal} and a \emph{vertical surgery}. The crucial property of these operations is that the triangulation dual to the resulting branched surface is veering and has more tetrahedra than the initial veering triangulation  \cite[Propositions 4.4 and 4.8]{Tsang-branched-surfaces}. In this paper we will use vertical surgeries to construct infinitely many veering triangulations with a vanishing taut polynomial. 
	
	Let $\V$ be a veering triangulation of $M$ with a veering branched surface $\B$. Let~$\lambda$ be an oriented simple closed curve smoothly embedded in $\B$ which avoids vertices of~$\B$ and which admits a smooth regular neighborhood $A$ in $\B$ that is an annulus. Then there is a tubular neighborhood $N$ of $\lambda$ in $M$ such that $A$ splits $N$ into two half-neighborhoods $N_1$, $N_2$, each homeomorphic to a solid torus. 
	
	\begin{definition}\label{def:vertical:surgery:curve}\cite[Definition 4.6]{Tsang-branched-surfaces} The curve~$\lambda$ is a \emph{vertical surgery curve} if 
	\begin{enumerate}
		\item for $i=1,2$ all arcs in the branch locus of $N_i \cap \B$ are oriented in the same direction, but the orientations of branch arcs in $N_1 \cap \B$ and $N_2 \cap B$ are opposite,
		\item $\lambda$ is oriented so that for every arc $a$ in the branch locus of $N \cap B$ which is traversed by $\lambda$ the orientation of $\lambda$ points from the one-sheeted side of $a$ to the two-sheeted side of $a$.
	\end{enumerate}
An example of  a vertical surgery curve is presented in Figure \ref{fig:vertical_surgery} (a).
\end{definition}
\begin{figure}[h]
	\begin{center}
		\includegraphics[width=0.4\textwidth]{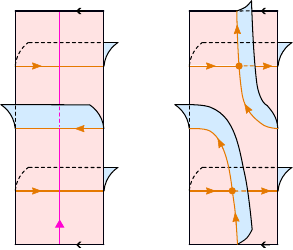} 
		\put(-137, -15){(a)}
			\put(-37, -15){(b)}
			\put(-125, 15){$\lambda$}
	\end{center}
	\caption{(a)  An example of a vertical surgery curve. (b) The result of the $(1,1)$-vertical surgery along~$\lambda$. The dual veering triangulation has two more tetrahedra, one for each new vertex of the surgered branched surface.}
	\label{fig:vertical_surgery}
\end{figure}

Let $\mu$ be the meridian of $N_1$ oriented consistently with the arcs in the branch locus of $N_1 \cap \B$; this orientation is well defined by Definition~\ref{def:vertical:surgery:curve}~(1). Fix an integer $ k \geq 0$. The $(1,k)$-Dehn surgery of $M$ along the core curve of $N_1$ can be realized by  cutting~$N_1$ out of $M$ and gluing it back via a map which sends $\mu$ to $\mu + k\lambda$. In \cite[Section 4.2]{Tsang-branched-surfaces} Tsang observed that this can be performed so that the surgered manifold admits a veering branched surface and thus a veering triangulation; see Figure \ref{fig:vertical_surgery} (b). We call the corresponding operation on the level of branched surfaces the \emph{$(1,k)$-vertical surgery along $\lambda$}. If $\V$ has $n$ tetrahedra, and $n_i$ denotes the number of arcs in the branch locus of  $N_i \cap \B$ then the  veering triangulation dual to the surgered branched surface has $n + k n_1 n_2$ tetrahedra.

\subsubsection{Searching for vertical surgery curves}

In Section \ref{sec:main} we will be looking for a vertical surgery curve with special properties. Thus we need to know more generally how to find vertical surgery curves in veering branched surfaces.  Recall from Lemma~\ref{lem:direction:well:defined}~(1) that if $(\epsilon, \epsilon')$ is a positively oriented turn in the boundary of $\sigma$ then either $\sigma$ is on the one-sheeted side of $\epsilon'$ and on the two-sheeted side of $\epsilon$ or $\sigma$ is on the two-sheeted side of both $\epsilon$ and $\epsilon'$. In the latter case we say that the turn $(\epsilon, \epsilon')$ is \emph{smooth}. 
A reader might wish to consult Figure \ref{fig:veering_branched_surface} to check that a smooth positively oriented turn indeed seems smooth; see also Figure \ref{fig:turns} (b). This terminology will be used in the proof of the following lemma.
\begin{lemma}\label{lem:colors:on:the:1:sheeted:side}
	Let $\sigma$ be a sector of a veering branched surface.
	\begin{enumerate}
		\item Let $\epsilon$ be a lowermost edge of $\sigma$. Then the sector on the one-sheeted side of $\epsilon$ has the same color as $\sigma$.
		\item Let $\epsilon$ be a lower edge of $\sigma$ which is not lowermost. Then the sector on the one-sheeted side of $\epsilon$ has a different color than $\sigma$.
	\end{enumerate}
\end{lemma}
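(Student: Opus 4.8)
The plan is to reduce both statements to an inspection of the local model of a veering branched surface near a vertex, namely Figure~\ref{fig:veering_branched_surface}. First I would observe that in both parts the sector $\tau$ on the one-sheeted side of the relevant edge $\epsilon$ has, by Lemma~\ref{lem:direction:well:defined}(2), the edge $\epsilon$ as an uppermost edge; in particular $\tau$ and $\sigma$ both contain $\epsilon$ in their boundary, so both are visible in the local model at each endpoint of $\epsilon$. It then suffices to locate $\sigma$ and $\tau$ in Figure~\ref{fig:veering_branched_surface} and read off their colours.

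For part (1): since $\epsilon$ is a lowermost edge of $\sigma$, its initial endpoint $v$ is the bottom vertex $b_\sigma$, so in the local model at $v$ the two edges outgoing from $v$ are exactly the two lowermost edges of $\sigma$ (Definition~\ref{def:taut} and Lemma~\ref{lem:direction:well:defined}(3)), which pins down $\sigma$ among the sectors meeting $v$; then $\tau$ is the sector on the one-sheeted side of $\epsilon$, and consulting Figure~\ref{fig:veering_branched_surface} in each of the two configurations (a), (b) one reads that $\sigma$ and $\tau$ carry the same colour. For part (2): since $\epsilon$ is a lower but not lowermost edge of $\sigma$, Lemma~\ref{lem:direction:well:defined}(4) (and its proof) yields an edge $\epsilon'$ of $\sigma$ with $(\epsilon',\epsilon)$ a positively oriented turn in $\partial\sigma$ along which $\sigma$ lies on the two-sheeted side of both edges, i.e.\ a smooth turn; I would then inspect the local model at the common endpoint $w=\mathrm{init}(\epsilon)=\mathrm{term}(\epsilon')$, where $\sigma$ is the sector turning smoothly across $w$ from $\epsilon'$ to $\epsilon$ on the two-sheeted side and $\tau$ is again the sector on the one-sheeted side of $\epsilon$, and read off from Figure~\ref{fig:veering_branched_surface} that this time $\sigma$ and $\tau$ carry different colours.

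The hard part is the colour-reading in Figure~\ref{fig:veering_branched_surface}, and it has two delicate points. First, the two-sheeted side of $\epsilon$ carries two sectors --- its left and its right sector --- so I must be certain which is $\sigma$; this is precisely the data organised by Lemma~\ref{lem:direction:well:defined}(1),(3),(4), which guarantees that the left/right label of $\sigma$ agrees at all of its lower edges and is therefore well defined and equal to the colour of $\sigma$. Second, one sector in the local picture has colour not forced by that picture (the grey sector), so I would need to check that in each of the two cases neither $\sigma$ nor $\tau$ is the grey sector --- equivalently, that the asserted colour relation is genuinely local --- and it is exactly here that the distinction ``$\epsilon$ lowermost'' (part (1)) versus ``$\epsilon$ lower but not lowermost'' (part (2)) enters, accounting for the opposite conclusions. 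Modulo these checks, everything else is direct reading of Figure~\ref{fig:veering_branched_surface}, aided by Lemma~\ref{lem:direction:well:defined}.
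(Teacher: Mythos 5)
Your argument is correct and essentially the paper's own: part (2) is exactly the paper's proof (produce $\epsilon'$ with $(\epsilon',\epsilon)$ a positively oriented turn in $\partial\sigma$, use Lemma~\ref{lem:direction:well:defined}(2) to see the turn is smooth, then read off Figure~\ref{fig:veering_branched_surface}), and part (1) is the same direct inspection of Figure~\ref{fig:veering_branched_surface} at the bottom vertex that the paper invokes. Your grey-sector caveat resolves as you expect: since $\epsilon$ is an uppermost edge of the sector $\tau$ on its one-sheeted side, $\tau$ cannot also have $\epsilon$ as a lowermost edge, so the vertex in question lies in the interior of a side-arc of $\tau$, whence $\tau$ has a lower edge at that vertex and its colour (like $\sigma$'s) is readable from the local picture.
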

\begin{proof}
	\noindent \begin{enumerate}
		\item Follows immediately from Figure \ref{fig:veering_branched_surface}.
		\item Since $\epsilon$ is not lowermost, there is an edge $\epsilon'$ of $\sigma$ such that $(\epsilon', \epsilon)$ is a positively oriented turn in the boundary of $\sigma$. Since $\epsilon$ is not uppermost, Lemma \ref{lem:direction:well:defined}~(2) implies that  $(\epsilon', \epsilon)$ is smooth. Therefore it follows from Figure \ref{fig:veering_branched_surface} that the sector on the one-sheeted side of~$\epsilon$ is of a different color than $\sigma$. \qedhere
	\end{enumerate}
\end{proof}

Using Lemmas \ref{lem:direction:well:defined} and \ref{lem:colors:on:the:1:sheeted:side} in Figure \ref{fig:nbhd_sector} we present a neighborhood of a red sector of a veering branched surface with seven vertices in the boundary. If there are less vertices then the overall structure of the sector is the same ---  just some of the `flaps' attached to the lower parts of the boundary of the sector are missing. If there are more vertices then then there are more `flaps'. By reflecting Figure \ref{fig:nbhd_sector} across the plane containing the central sector and switching red and blue colors we would obtain a  picture of a neighborhood of a blue sector with seven vertices in its boundary.

\begin{figure}[h]
	\begin{center}
		\includegraphics[width=0.4\textwidth]{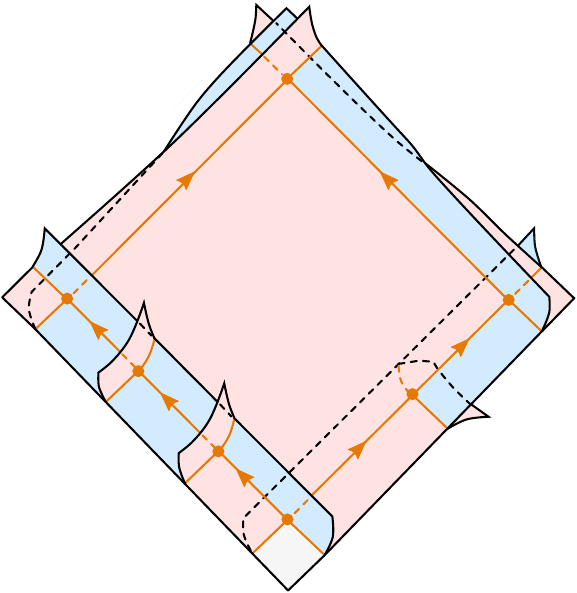} 
	\end{center}
	\caption{A neighborhood of a red sector of a veering branched surface with~seven vertices in its boundary. Orientations on edges of the neighborhing sectors are ommited; they all point upwards.}
	\label{fig:nbhd_sector}
\end{figure}

We are now ready to state a useful characterization of vertical surgery curves.

\begin{lemma}\label{lem:vertical:surgery:curves}
Let $\lambda$ be a vertical surgery curve in a veering branched surface $\B$. Let $\sigma$ be a sector of~$\B$ such that $\lambda \cap \mathrm{int}(\sigma) \neq \emptyset$. A connected component $c$ of $\lambda \cap \sigma$ is an oriented simple arc which connects a lowermost edge of $\sigma$ to an uppermost edge of $\sigma$. 
\end{lemma}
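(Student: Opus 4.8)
The plan is to analyze a component $c$ of $\lambda \cap \sigma$ locally, using the two defining conditions of a vertical surgery curve together with the combinatorial structure of a veering sector captured in Lemma~\ref{lem:direction:well:defined}. First I would observe that $\lambda$ is smoothly embedded in $\B$ and avoids the vertices of $\B$, so $c$ is a smooth embedded arc in the disk $\sigma$ whose two endpoints lie in the interior of edges of $\partial\sigma$ (it cannot be a closed loop inside $\sigma$ since $\sigma$ is a disk, nor can an endpoint be a vertex of $\B$). So the content of the lemma is to identify the two edges of $\partial\sigma$ that $c$ meets: one must be a lowermost edge and the other an uppermost edge.

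The key local input is Definition~\ref{def:vertical:surgery:curve}(2): wherever $\lambda$ crosses a branch arc $a$ of the branch locus, it passes from the one-sheeted side of $a$ to the two-sheeted side of $a$. Now each edge $\epsilon$ of $\partial\sigma$ that $c$ meets is such a branch arc, and $\sigma$ lies on one of its two sides. Consider the endpoint of $c$ on $\epsilon$: as $c$ is followed in its given orientation, it crosses $\epsilon$; since the portion of $c$ just before this crossing lies in $\inter(\sigma)$, the side of $\epsilon$ containing $\sigma$ is the side from which $c$ departs. Thus, by condition (2), at the endpoint of $c$ where $c$ \emph{enters} $\sigma$, the sector $\sigma$ must be on the one-sheeted side of that edge; and at the endpoint where $c$ \emph{leaves} $\sigma$, the sector $\sigma$ must be on the two-sheeted side of that edge. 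By Lemma~\ref{lem:direction:well:defined}(2), $\sigma$ is on the one-sheeted side of an edge iff that edge is an uppermost edge of $\sigma$, so the edge where $c$ enters $\sigma$ is the uppermost edge of $\sigma$. It remains to show the exit edge is a lowermost edge of $\sigma$, not merely a lower edge.

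To pin down that the exit edge is \emph{lowermost} I would use condition (1) of Definition~\ref{def:vertical:surgery:curve} together with the coloring discussion. Suppose, for contradiction, that $c$ exits $\sigma$ through a lower edge $\epsilon$ that is not lowermost. By Lemma~\ref{lem:colors:on:the:1:sheeted:side}(2), the sector $\sigma'$ on the one-sheeted side of $\epsilon$ has a color different from $\sigma$; but $\lambda$ crosses $\epsilon$ from its one-sheeted side $\sigma'$ into the two-sheeted side, which contains $\sigma$. One then continues $\lambda$ through $\sigma$ to its uppermost edge, crosses into the next sector $\sigma''$ on the two-sheeted side of that uppermost edge, and so on. Tracking how colors and the one-/two-sheeted sides alternate as $\lambda$ passes from sector to sector — using Lemma~\ref{lem:direction:well:defined}(2),(3) and Lemma~\ref{lem:colors:on:the:1:sheeted:side}(1),(2), i.e. essentially reading off Figure~\ref{fig:nbhd_sector} — one sees that condition (1) (all branch arcs in $N_1 \cap \B$ coherently oriented, opposite to those in $N_2 \cap \B$, and $\lambda$ crossing from one-sheeted to two-sheeted throughout) forces, along the portion of $\lambda$ near $\sigma$, that the edges $\lambda$ traverses are exactly the lowermost/uppermost ones; an exit through a non-lowermost lower edge would violate the coherence of the branch-arc orientations in $N \cap \B$, i.e. break condition (1).

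I expect the last step — ruling out exit through a lower-but-not-lowermost edge via condition (1) — to be the main obstacle, since it is the only place requiring a genuinely global/annular argument rather than a purely local reading of Figure~\ref{fig:veering_branched_surface}. The cleanest route is probably to phrase it as: the annulus $A$ carrying $\lambda$ meets $\sigma$ in a band neighborhood of $c$, and condition (1) says the two boundary arcs of this band lie on edges across which the branch locus is coherently oriented; combined with Lemma~\ref{lem:direction:well:defined}(3),(4), this is possible only when $c$ runs from a lowermost to an uppermost edge.
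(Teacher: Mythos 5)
Your overall strategy is the paper's: use Definition~\ref{def:vertical:surgery:curve}(2) together with Lemma~\ref{lem:direction:well:defined}(2) to locate one endpoint of $c$ on an uppermost edge, and use Definition~\ref{def:vertical:surgery:curve}(1) together with the coloring Lemma~\ref{lem:colors:on:the:1:sheeted:side} to pin the other endpoint on a \emph{lowermost} edge. However, your first paragraph ends with the roles of entry and exit swapped, contradicting the sentence that precedes it: since $\lambda$ crosses each branch arc from its one-sheeted side to its two-sheeted side, the edge through which $c$ \emph{leaves} $\sigma$ is the one having $\sigma$ on its one-sheeted side, hence an uppermost edge by Lemma~\ref{lem:direction:well:defined}(2), while the edge through which $c$ \emph{enters} $\sigma$ has $\sigma$ on its two-sheeted side, hence is a lower edge. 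You instead conclude that $c$ enters through the uppermost edge and exits through a lower one, which reverses the orientation asserted in the lemma (compare Figure~\ref{fig:nbhd_sector} and the paper's use of $\lambda_0$); your second paragraph then silently reverts to the correct picture (``$\lambda$ crosses $\epsilon$ from its one-sheeted side $\sigma'$ into the two-sheeted side, which contains $\sigma$'' is an entry, not an exit). This is a repairable slip, but as written the stated conclusion of the first step is false.

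The genuine gap is the step you flag yourself: upgrading ``lower'' to ``lowermost''. You correctly invoke Lemma~\ref{lem:colors:on:the:1:sheeted:side}(2) — if $c$ entered $\sigma$ across a lower edge that is not lowermost, the sector on the one-sheeted side of that edge (the sector $\lambda$ has just come from) would have the opposite color — but the contradiction with Definition~\ref{def:vertical:surgery:curve}(1) is only asserted (``tracking how colors \dots one sees \dots''), with no actual derivation; your closing ``cleanest route'' is likewise a guess rather than an argument. The missing bridge, which is exactly how the paper closes the proof, is the statement that condition (1) forces \emph{all sectors whose interiors are traversed by $\lambda$ to have the same color}: at each crossing the third sheet attaches to the annulus $A$ along a branch arc whose orientation relative to the two half-neighborhoods $N_1$, $N_2$ is governed by the color of the sector $\lambda$ continues into, so a color change would produce incoherently oriented branch arcs on one side of $A$. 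Once that is recorded, $\lambda$ enters $\sigma$ from a sector of the same color as $\sigma$, and Lemma~\ref{lem:colors:on:the:1:sheeted:side} immediately forces the entry edge to be lowermost. With the orientation corrected and this bridging statement supplied, your argument coincides with the paper's.
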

\begin{proof}
	Condition (2) of Definition  \ref{def:vertical:surgery:curve}  implies that if $A$ is a smooth regular neighborhood of $\lambda$ in $\B$ then the sectors of $\B$ attached to $A$ are all combed in the same direction. Thus it follows from Lemma \ref{lem:direction:well:defined} that  $c$ must run from a lower edge of $\sigma$ to one of the uppermost edges of $\sigma$; see also Figure \ref{fig:nbhd_sector}. By condition (1) of Definition  \ref{def:vertical:surgery:curve},
	all sectors whose interiors are traversed by $\lambda$ have the same color. Thus $\lambda$ must enter $\sigma$ from a sector of the same color as $\sigma$. Lemma \ref{lem:colors:on:the:1:sheeted:side} implies that  $c$ runs from one of the lowermost edges of $\sigma$ to one of its uppermost edges.
	\end{proof}
	\subsection{The taut polynomial}\label{sec:taut:poly}

Let $\mathcal{V}$ be a finite veering triangulation of a 3-manifold~$M$, with the set $T$ of tetrahedra, the set $F$ of 2-dimensional faces and the set $E$ of edges. Let
\[H= \bigslant{H_1(M;\zz)}{\text{torsion}}.\] 
We call the cover of $M$ associated to the kernel of the projection 	\[\pi_1(M)\rightarrow H\] the \emph{maximal free abelian cover} of $M$, and denote it by $\Mab$.
Its deck group is isomorphic to $H$. 
The cover $\Mab$ admits a veering triangulation $\Vab$ induced by the covering map $\Mab \rightarrow M$. 
We can identify the sets of tetrahedra, faces, and edges of $\Vab$ with $H \times T$, $H \times F$, and $H \times E$, respectively. We denote the elements of $H \times X$, where $X \in \lbrace T, F, E \rbrace$ by $h\cdot x$, where $h \in H$ and $x \in X$. We say that a simplex $h\cdot x$ of $\V^{fab}$ has the \emph{$H$-coefficient} equal to $h$, or that it is \emph{decorated} with $h$.
We use the multiplicative convention for $H$, and set the action of $H$ on $H\cdot X$ to be given by $h\cdot(g\cdot x) \mapsto (hg)\cdot x$. This induces an identification of the free abelian groups generated by $H\times T$, $H\times F$ and  $H\times E$  with the free $\zz\lbrack H\rbrack$-modules $\zz\lbrack H\rbrack^T, \zz\lbrack H\rbrack^F, \zz\lbrack H\rbrack^E$, respectively.

Let $\B$ be the veering branched surface of $\V$. If $\Vab$ and $\B^{fab}$ are put in a dual position then for every face $h \cdot f \in H \times F$  the intersection of~$\B^{fab}$ with $h\cdot f$ is a 3-valent train track $\tau_{h\cdot f}$ with one switch $v$ in the interior of $h\cdot f$ and three branches: each joining $v$ with the mid-point of an edge of~$h\cdot f$; see Figure~\ref{fig:switch_relation}. We call this train track the \emph{stable train track} of $h\cdot f$. It determines a \emph{switch relation} between its three half-branches: the large half-branch is equal to the sum of the two small half-branches. By identifying the half-branches with the edges in the boundary of $h\cdot f$ which they meet, we obtain a relation between the edges in the boundary of $h\cdot f$. Observe that the switch relation associated of $h' \cdot f$ can be obtained from the switch relation associated to $h\cdot f$  by multiplying each of its terms by $h'h^{-1}$. That is, up to multiplication by elements of $H$ there are only $|F|$ switch relations.

\begin{figure}[h]
	\begin{center}
		\includegraphics[width=0.236\textwidth]{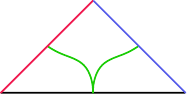} 
		\put(-60,-11){$h_0 \cdot e_0$}
		\put(-111,22){$h_1 \cdot e_1$}
		\put(-15,22){$h_2 \cdot e_2$}
	\end{center}
	\caption{The stable train track in a triangle of $\Vab$ determines the switch relation  $h_0\cdot e_0 = h_1\cdot e_1 + h_2\cdot e_2$ between the edges in its boundary.}
	\label{fig:switch_relation}
\end{figure}

Using the above observations, we  define a $\ZH$-module homomorphism \[D : \ZH^F \rightarrow \ZH^E\] which assigns to $1\cdot f$ 
a rearrangement of the switch relation of $\tau_{1\cdot f}$. For example, the image under $D$ of the triangle presented in Figure \ref{fig:switch_relation} is equal to
\[h_0\cdot e_0 - h_1\cdot e_1 - h_2 \cdot e_2 \in \ZH^E.\] We call $D$ the \emph{full branch relations matrix} of $\V$.  If $\V$ has $n$ tetrahedra then $D$ can be presented as an $n\times 2n$ matrix with $\ZH$ entries. 
\begin{definition}\label{def:taut_poly} \cite[Section 3.1]{LMT}
	The taut polynomial $\Theta_\V$ of $\V$ is the greatest common divisor of the maximal minors of $D$.
\end{definition}

\subsubsection{Standard computation}\label{sec:computation}
According to Definition \ref{def:taut_poly}, if $\V$ has $n$ tetrahedra then to compute $\Theta_\V$ one needs to find the greatest common divisor of $2n \choose n$ minors of a matrix~$D$ with coefficients in $\ZH$.
Thankfully, there are some predictable linear dependencies between the columns of $D$. Thus one does not have to look at all maximal minors of $D$ to compute $\Theta_\V$; see \cite[Lemma~5.4]{Parlak-computation}. More precisely, let $\Gamma$ be the dual graph of $\V$. Choose a spanning tree~$Y$ of $\Gamma$ and let $D_Y$ be the matrix obtained from $D$ by deleting the columns associated to the faces of $\V$ that are dual to the edges of $Y$. We call $D_Y$ the \emph{branch relations matrix} of $\V$ \emph{reduced modulo $Y$} or, when the choice of a tree is evident, \emph{reduced branch relations matrix}.
\begin{fact} \cite[Corollary 5.7]{Parlak-computation} \label{fact:via:tree}
	Let $Y$ be a spanning tree of the dual graph of $\V$. The taut polynomial $\Theta_{\V}$ of $\V$ is equal to the greatest common divisor of the branch relations matrix of $\V$ reduced modulo $Y$.\qed
\end{fact}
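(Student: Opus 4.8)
The plan is to reduce the statement to a purely linear‑algebraic assertion about the full branch relations matrix $D$: deleting the columns indexed by the faces dual to the edges of the spanning tree $Y$ does not change the ideal generated by the maximal minors. Recall that $D$ is an $n\times 2n$ matrix over $\ZH$ (a UFD, being a Laurent polynomial ring over $\zz$), with rows indexed by $E$ and columns by $F$; its maximal minors are its $n\times n$ minors. Write $I(D)\subseteq\ZH$ for the ideal they generate, so that $\Theta_\V$ is a generator of the smallest principal ideal containing $I(D)$. The reduced matrix $D_Y$ is obtained by deleting the $n-1$ columns corresponding to the edges of $Y$, hence is $n\times(n+1)$, and it suffices to prove $I(D)=I(D_Y)$. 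The inclusion $I(D_Y)\subseteq I(D)$ is immediate, so the content is the reverse inclusion.

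The reverse inclusion follows once we know the following: \emph{for every face $f$ with $\epsilon(f)\in Y$, the column $D(1\cdot f)$ is a $\ZH$-linear combination of the columns $D(1\cdot g)$ with $\epsilon(g)\notin Y$.} Indeed, granting this, an $n\times n$ minor of $D$ whose column set meets $Y$ can be expanded along such a column: each resulting term is either zero (a repeated column) or an $\ZH$-multiple of an $n\times n$ minor of $D$ whose column set meets $Y$ in strictly fewer edges, so by induction every maximal minor of $D$ is an $\ZH$-combination of maximal minors of $D_Y$, i.e.\ lies in $I(D_Y)$. To establish the claim I would use the structural fact that for each tetrahedron $t$ of $\V$, with faces $f_1,\dots,f_4$, there is a relation $\sum_{i=1}^4 c_i\,D(1\cdot f_i)=0$ in $\ZH^E$ in which every coefficient $c_i$ is a \emph{unit} of $\ZH$ --- a sign times the $H$-decoration carried by the face $f_i$ of $\Vab$ relative to a fixed lift of $t$. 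Equivalently, $D\circ\partial=0$ where $\partial\colon\ZH^T\to\ZH^F$ sends $t$ to the unit-coefficient combination of the four edges of $\Gamma$ incident to $\nu(t)$. This relation is read off from the local model of $\B$ at the vertex $\nu(t)$ (Figure~\ref{fig:veering_branched_surface}): the stable train tracks on the four faces of $t$ assemble into the track $\B\cap t$, and the induced dependence among their three switch relations makes all six edge-variables of $t$ cancel.

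Given these per-tetrahedron relations, I would prove the claim by induction on $|Y|$, viewing $Y$ as a forest in $\Gamma$ and pruning leaves; the base case $Y=\emptyset$ is vacuous. If $|Y|\geq 1$, pick a leaf $v=\nu(t)$ of the forest $Y$ with pendant edge $\epsilon(f_1)$. Since $\Gamma$ is $4$-valent at $v$ and $v$ has only one $Y$-edge, the three other edges $\epsilon(f_2),\epsilon(f_3),\epsilon(f_4)$ at $v$ lie outside $Y$; the relation at $t$ then reads $c_1 D(1\cdot f_1)=-\sum_{i=2}^4 c_i D(1\cdot f_i)$ with $c_1$ a unit, so $D(1\cdot f_1)=-c_1^{-1}\sum_{i=2}^4 c_i D(1\cdot f_i)$ is a $\ZH$-combination of columns outside $Y$. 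Applying the inductive hypothesis to the forest $Y\setminus\{\epsilon(f_1)\}$ expresses every other $Y$-column as a $\ZH$-combination of columns indexed outside $Y\setminus\{\epsilon(f_1)\}$, i.e.\ outside $Y$ together with $\epsilon(f_1)$; substituting the displayed formula for $D(1\cdot f_1)$ removes that last dependence and completes the induction. This gives $I(D)=I(D_Y)$, hence $\Theta_\V=\gcd$ of the maximal minors of $D_Y$.

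The main obstacle is the middle step: deriving the per-tetrahedron relation with \emph{unit} $H$-coefficients from the local structure of $\B$. Everything else --- the gcd/UFD bookkeeping, the minor expansion, and the leaf-pruning induction --- is routine once that relation is in hand, and it is precisely the fact that all four coefficients are units that makes the pruning step work for an arbitrarily chosen spanning tree, since the pendant edge then always has an invertible coefficient to solve against.
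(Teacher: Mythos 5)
Your strategy is, in outline, the same as that of the source this Fact is imported from: the paper itself gives no proof, quoting \cite{Parlak-computation}, and the proof there likewise rests on per-tetrahedron linear dependencies among the columns of the full branch relations matrix (these are exactly the ``predictable linear dependencies'' alluded to in Section~\ref{sec:computation}) combined with the kind of spanning-tree reduction you describe. Your bookkeeping is correct: over the UFD $\ZH$ equality of the ideals of maximal minors gives equality of the gcds; column multilinearity reduces every maximal minor of $D$ to a $\ZH$-combination of minors avoiding tree columns once each tree column is a combination of non-tree columns; and the leaf-pruning induction is legitimate, since a pendant tree edge $\epsilon(f_1)$ at a leaf $\nu(t)$ cannot be a loop of $\Gamma$, so its coefficient in the relation at $t$ is a single unit, and all other dual edges at $\nu(t)$ lie outside the forest.

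The one substantive gap is the step you yourself flag: the per-tetrahedron relation with unit coefficients is asserted rather than proved, and the cancellation is not the purely formal one suggested by ``makes all six edge-variables of $t$ cancel.'' Writing $t$ as a quadrilateral with top diagonal $AC$ and bottom diagonal $BD$ and matching coefficients slot by slot, a unit-coefficient dependence among the four switch relations exists if and only if (i) for each top face of $t$ the large half-branch of its stable track meets one of the two equatorial edges of $t$, never the top diagonal $AC$, and (ii) the large half-branches of the two top faces meet \emph{opposite} equatorial edges; otherwise the six slot equations in the four unknown coefficients are inconsistent. Both inputs are genuinely veering: (i) is equivalent to each side-arc of a sector containing at least two edges (the consequence of Lemma~\ref{lem:direction:well:defined} recorded after its proof), while (ii) must be read off from the local models in Figure~\ref{fig:veering_branched_surface}, which is precisely where the veering smoothening (rather than mere tautness) is used. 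So the ``routine'' part of your argument is fine, but the deferred lemma is exactly where the hypothesis enters and still requires this explicit check; it is in any case true, as one can confirm on the paper's own data, e.g.\ $D_0(1\cdot f_0)-D_0(1\cdot f_1)-D_0(1\cdot f_2)+D_0(1\cdot f_3)=0$ for tetrahedron $0$ of $\V_0$, in accordance with your claim.
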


Practical computation of $\Theta_\V$ is explained in great detail in \cite[\mbox{Sections 4 -- 5}]{Parlak-computation}. Below we briefly recall the main ideas, as they will be helpful in understanding computations in Section \ref{sec:initial}.

Fix a spanning tree $Y$ of $\Gamma$ and its lift $Y^{fab}$ to the dual graph $\Gamma^{fab}$ of $\V^{fab}$. 
We decorate a tetrahedron of $\Vab$ with $1\in H$ if and only if it is dual to a vertex of $Y^{fab}$.  We decorate a face of $\Vab$ with $1\in H$ if and only if it is a bottom face of a tetrahedron decorated with 1, and we decorate an edge of $\Vab$ with $1\in H$ if and only if it is the bottom diagonal of a tetrahedron decorated with~1. These conventions determine decorations of all other simplices of $\V^{fab}$. Note that edges and faces inherit their $H$-coefficients from the tetrahedron immediately above them. 

By Definition \ref{def:taut_poly}, to compute $\Theta_\V$ it suffices to know for every $f \in F$ the structure of the stable train track in $1\cdot f$ and the $H$-coefficients of the edges in the boundary of $1\cdot f$.  Equivalently, we can decorate sectors, edges, and vertices of $\B^{fab}$ with the $H$-coefficients of their dual edges, faces, and tetrahedra, respectively, and find for each $e \in E$ the $H$-coefficients of the edges in the boundary of the sector $1 \cdot \sigma(e)$ of $\B^{fab}$. Observe that sector $1 \cdot \sigma(e)$ has edge $h^{-1}\cdot \epsilon(f)$ in its boundary if and only if face $1\cdot f$ has edge $h\cdot e$ in its boundary. Furthermore, by Lemma \ref{lem:direction:well:defined} (2), edge $h\cdot e$ is dual to the large half-branch of $1\cdot f$ if and only if the edge  $h^{-1}\cdot \epsilon(f)$  of $\B^{fab}$ is an uppermost edge of $1 \cdot \sigma(e)$; by our labelling conventions this actually implies $h=1$. It follows that we can find the entries of the full branch relations matrix of $\V$ using only sectors $\B^{fab}$ that are decorated with $1\in H$.

The advantage of the dual approach for computing the taut polynomial is that it is very simple to find the $H$-coefficients of all edges in the boundary of $1\cdot \sigma$. 
If an edge of $\B^{fab}$ is a lift of an edge of the spanning tree $Y$ then the $H$-coefficients of its two endpoints are the same (and also equal to the $H$-coefficient of the edge itself). If an edge of $\B^{fab}$ is a lift of an edge $\epsilon$ of $\Gamma$ which is not in $Y$ then there is a unique cycle $\zeta(\epsilon)$ in the subgraph $\epsilon \cup Y$ of $\Gamma$. We call the image of $\zeta(\epsilon)$ under the projection $\pi_1(M) \rightarrow H_1(M;\zz) \rightarrow H$ the \emph{$H$-pairing} of $\epsilon$, or the $H$-pairing of $f$ if $\epsilon = \epsilon(f)$. If $h$ is the $H$-pairing of $\epsilon$ then the $H$-coefficient of the initial vertex of  $1\cdot \epsilon$ is equal to $h^{-1}$. By our labelling  conventions, the uppermost edges of a sector $1 \cdot \sigma$ of $\B^{fab}$ have  their $H$-coefficients equal to 1. The $H$-coefficients of the lower edges of $1\cdot \sigma$ can be found iteratively: the $H$-coefficient of the next  edge in the boundary of $1 \cdot \sigma$ (from the top, in the same side-arc) is equal to the product of the $H$-coefficient and the inverse of the $H$-pairing of the previous edge. 

 Now it remains to explain how to compute the $H$-pairings. We say that a face of~$\V$ is a \emph{tree face} if it is dual to an edge of $Y$. Otherwise we say that $f$ is a \emph{non-tree face}.  Let $\partial: \zz^E \rightarrow \zz^F$ be the cellular boundary map from the sectors of $\B$ to the edges of $\B$. Since $\B$ is a deformation retract of $M$, $H_1(M;\zz)$ is isomorphic to the cokernel of the homomorphism $\partial_Y$ obtained from  $\partial: \zz^E \rightarrow \zz^F$ by deleting the rows corresponding to the edges of~$Y$. This follows from the standard `contraction along a tree' argument. This cokernel can be computed using Smith normal forms. Namely, let
\begin{equation}
\label{eqn:SNF}
S = U \partial_Y V
\end{equation}
be the Smith normal form of $\partial_Y$. If $H$ is of rank $r$ then the last $r$ entries of the $i$-th column of $U$ gives the $H$-pairing of the $i$-th non-tree face of $\V$. The $H$-pairings of tree faces are all trivial. We refer the reader to \cite[Section 4.2]{Parlak-computation} for a more thorough discussion. 

	\subsubsection{Computation via Fox calculus}\label{sec:computation:fox}
An even faster method to compute the taut polynomial relies on the fact that it is equal to a certain twisted Alexander polynomial of the underlying manifold. More precisely, the stable branched surface $\B$ of $\V$ determines the \emph{orientation homomorphism}
\[\omega: \pi_1(M) \rightarrow \lbrace -1, 1 \rbrace \]
such that $\omega(\gamma) = 1$ if and only if $\gamma$ lifts to a loop in the orientable double cover of~$\B$; see \cite[Section 4]{parlak-alex}. Let $\pi: \pi_1(M) \rightarrow H$ be the natural projection arising from the abelianization and killing the torsion of $H_1(M;\zz)$. We can use these two homomorphisms to construct a \emph{tensor representation}
\begin{gather*}
\omega\otimes \pi: \pi_1(M) \rightarrow \mathrm{GL}(1, \ZH)\cong \pm H\\
\gamma \mapsto \omega(\gamma) \cdot \pi(\gamma)
\end{gather*}
and thus a \emph{twisted Alexander polynomial} $\Delta_M^{\omega \otimes \pi}$ of $M$. We refer the reader to \cite{FriedlVidussi_survey} for an overview of twisted Alexander polynomials. Their importance in this paper follows from the following result.
\begin{theorem} \cite[Theorem 5.7]{parlak-alex} \label{thm:taut:is:twisted}
	Let $\V$ be a veering triangulation of $M$. Then
	\[
	\pushQED{\qed} 
	\Theta_\V = \Delta_M^{\omega\otimes \pi}.
	\qedhere
	\popQED\]
\end{theorem}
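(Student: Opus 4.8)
The plan is to compare the two invariants by exhibiting both as the gcd of the maximal minors of one and the same matrix over $\ZH$, namely the $(\omega\otimes\pi)$-twisted boundary matrix associated to the stable branched surface $\B$ regarded as a spine of $M$. Since $\B$ is a deformation retract of $M$, it is a finite $2$-complex with $|T|$ vertices, $|F|$ edges, and $|E|$ sectors; contracting a spanning tree $Y$ of its $1$-skeleton $\Gamma$ gives a presentation of $\pi_1(M)$ with one generator $g_f$ for each non-tree face $f$ and one relator $r_e$ for each edge $e$, the relator $r_e$ being the cyclic word spelled by the oriented boundary of the sector $\sigma(e)$ after $Y$ is collapsed. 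This presentation has deficiency one, in accordance with $b_1(M)\ge 1$.

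The first step is to recall the Fox-calculus description of $\Delta_M^{\omega\otimes\pi}$ with the conventions of \cite{FriedlVidussi_survey}: form the twisted Fox Jacobian $J$ whose $(e,f)$-entry is the Fox derivative $\partial r_e/\partial g_f$ pushed into $\ZH$ through the one-dimensional representation $\omega\otimes\pi$, an $|E|\times(|E|+1)$ matrix, and recall that the standard theory relates the gcd of its maximal minors to $\Delta_M^{\omega\otimes\pi}$ (up to units, and possibly up to a controlled factor coming from $H_0^{\omega\otimes\pi}(M)$, which must be tracked). It then suffices to match $J$, up to transposition and up to multiplication of rows and columns by units of $\ZH$, with the reduced branch relations matrix $D_Y$ from Fact \ref{fact:via:tree}.

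The heart of the argument is this entry-by-entry identification, carried out locally at each face $f \in F$, i.e.\ at each $1$-cell $\epsilon(f)$ of $\B$. On the branch-relations side, the entry of $D_Y$ attached to $(e,f)$ is read from the switch relation of the stable train track $\tau_{1\cdot f}$: the edge of the triangle $f$ that is the large half-branch enters with sign $+1$, the two small half-branches enter with sign $-1$, and each edge is weighted by its $H$-pairing, in accordance with the labelling conventions of Section \ref{sec:computation} (a simplex of $\Vab$ inherits the $H$-coefficient of the tetrahedron directly above it). On the Fox side, $\partial r_e/\partial g_f$ records each traversal of $\epsilon(f)$ by the boundary word of $\sigma(e)$, weighted by the image under $\pi$ of the loop that the traversal closes through the tree $Y$ --- which is exactly that $H$-pairing --- and signed according to whether this loop lifts to the orientable double cover of $\B$, that is, by $\omega$ of the loop. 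The input that makes the two sides agree is the veering combinatorics recorded in Lemma \ref{lem:direction:well:defined}: among the boundary edges of $\sigma(e)$ the ones on the one-sheeted side are precisely the uppermost edges, and dually $e$ is the large half-branch of $\tau_{1\cdot f}$ exactly when $\epsilon(f)$ is an uppermost edge of $\sigma(e)$; this, with Definition \ref{def:taut}, so constrains the cyclic boundary word of $\sigma(e)$ that, after free reduction, each $\partial r_e/\partial g_f$ collapses to a single monomial whose $H$-part is the $H$-pairing and whose sign --- once the twist by $\omega$ is inserted --- is $+1$ on uppermost edges and $-1$ on lower edges, exactly as in the switch relation. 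Matching the $H$-decorations is then immediate, since the $H$-pairings building $D_Y$ are by construction the $\pi$-images of the tree-closed loops.

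Granting this identification, the conclusion is formal: the gcd of the maximal minors is invariant under transposition and under scaling rows and columns by units, so
\[
\Delta_M^{\omega\otimes\pi} \;=\; \gcd(\text{maximal minors of }J) \;=\; \gcd(\text{maximal minors of }D_Y) \;=\; \Theta_\V
\]
by Fact \ref{fact:via:tree}. I expect the main obstacle to be the sign bookkeeping inside the local step: one must verify, turn by turn around a sector using Definition \ref{def:taut} and Lemma \ref{lem:direction:well:defined}, that the sign produced by the honest cellular boundary of $\sigma(e)$ --- which a priori depends only on which of the two side-arcs an edge lies in --- differs from the uniform uppermost/lower sign of the switch relation by exactly $\omega$ of the relevant sub-loop, equivalently that a loop in $\B$ fails to lift to its orientable double cover precisely when the coorientations forced by the taut structure are inconsistent along it. Secondary issues are: sectors with a repeated edge in their boundary, where $\epsilon(f)$ is traversed more than once and the several Fox-derivative terms must be checked to combine into the single $D_Y$-entry; and fixing the convention for $\Delta_M^{\omega\otimes\pi}$ so that no spurious factor (from $H_0^{\omega\otimes\pi}(M)$, say) survives, which also secures the equality in the degenerate case $\Theta_\V = 0$, where one argues with the elementary ideals of the twisted Alexander module rather than with Reidemeister torsion.
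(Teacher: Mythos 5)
This statement is not proved in the paper at all: it is imported verbatim from \cite[Theorem 5.7]{parlak-alex} (hence the immediate \qed), so there is no in-paper argument to compare against, and any honest verification has to reproduce the content of that reference. Measured against that source, your strategy is the right one and essentially the same in spirit: realize $\B$ as a spine, collapse a spanning tree of $\Gamma$ to get a deficiency-one presentation of $\pi_1(M)$ with generators the non-tree faces and relators the sector boundary words (this is exactly the presentation the present paper uses in Section \ref{sec:fundamental:group}), and then match the $(\omega\otimes\pi)$-twisted Fox Jacobian with the reduced branch relations matrix $D_Y$, so that Fact \ref{fact:via:tree} and the Fox-calculus description of $\Delta_M^{\omega\otimes\pi}$ finish the job. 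Also, under the convention the paper itself adopts in Section \ref{sec:computation:fox} (the twisted Alexander polynomial is the gcd of the $(m-1)\times(m-1)$ minors of the Alexander matrix), your worry about a stray $H_0$ factor does not arise.

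The weakness is that the entire mathematical content of the theorem sits in the step you defer to an ``expected obstacle.'' The honest cellular/Fox signs of a relator $r_e$ are $+$ along one side-arc of $\sigma(e)$ and $-$ along the other, whereas the switch relation is $+$ on the two uppermost edges and $-$ on all lower edges; these patterns genuinely disagree (already for a quadrilateral sector they differ in two places), and the claim that the discrepancy is exactly absorbed by $\omega$ of the prefix loops, \emph{consistently across all rows and columns simultaneously} (column rescalings by $\pm h$ must be chosen once and work for every relator in which that face appears), is precisely what \cite[Theorem 5.7]{parlak-alex} proves; asserting that it ``collapses to a single monomial with the right sign'' is not a proof of it. The same applies to your two secondary caveats: when $\epsilon(f)$ occurs more than once in $\partial\sigma(e)$ the $D_Y$-entry is itself a sum of monomials, so one must check a term-by-term correspondence, not a collapse to one term; and the claimed identification of the Fox prefixes with the iteratively computed $H$-coefficients of Section \ref{sec:computation} needs the bookkeeping with $H$-pairings spelled out. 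So as an outline your proposal tracks the cited proof faithfully, but as a standalone argument it stops exactly where the theorem begins.
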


Thus the taut polynomial of $\V$, like any twisted Alexander polynomial of $M$, can be computed from a  presentation of $\pi_1(M)$ using \emph{Fox calculus} \cite{FoxCalculus1, FoxCalculus5}. We outline this approach below. 

Let $F$ be a free group on $m$ generators $x_1, \ldots, x_m$. For $i =1, 2, \ldots, m$ the \emph{Fox derivative} with respect to $x_i$ is a function
\[\frac{\partial}{\partial x_i}: F \rightarrow \zz \lbrack F \rbrack\] defined by the following three axioms
\begin{gather*}
\frac{\partial}{\partial x_i}(x_j) = \begin{cases}
1 & \text{ if } i=j \\
0 & \text{ if } i\neq j
\end{cases}, \hspace{1.5cm} 
\frac{\partial}{\partial x_i}(1) = 0, \hspace{1.5cm}
\frac{\partial}{\partial x_i}(uv) = \frac{\partial}{\partial x_i}(u) + u \frac{\partial}{\partial x_i}(v)
\end{gather*}
for any $u, v \in F$. It was introduced by Fox in \cite{FoxCalculus1}. If $G$ is a finitely presented group with presentation
\begin{equation}\label{eqn:G:presentation}
G = \langle X \ | \ R \rangle  = \langle x_1, \ldots, x_m \ | \ r_1, \ldots, r_l \rangle \end{equation} then the matrix
\[\frac{\partial R}{\partial X} = \left\lbrack \frac{\partial r_i}{\partial x_j} \right\rbrack_{\substack{i=1, \ldots, m \\ \hspace{-0.18cm}j = 1, \ldots, l}} \] is called the \emph{Jacobian} of the presentation \eqref{eqn:G:presentation}. Let $G^{fab}=  \bigslant{\left(G/[G,G ]\right)}{\text{torsion}}$. Let $\pi: \zz\lbrack G \rbrack \rightarrow \zz \lbrack G^{fab}\rbrack $ be induced by the abelianization and torsion-killing epimorphism.
The matrix $\frac{\partial R}{\partial X}^\pi$ obtained from $\frac{\partial R}{\partial X}$ by mapping its entries through \mbox{$\pi: \zz \lbrack G \rbrack \rightarrow \zz \lbrack G^\pi \rbrack$} is called the \emph{Alexander matrix} of \eqref{eqn:G:presentation}. This terminology is motivated by the fact that the greatest common divisor of the $(m-1) \times (m-1)$ minors of $\frac{\partial R}{\partial X}^\pi$ is equal to the \emph{Alexander polynomial} of $G$ \cite[p. 209]{FoxCalculus2}. 
The greatest common divisor of the $(m-1) \times (m-1)$ minors of the matrix $\frac{\partial R}{\partial X}^{\omega\otimes \pi}$ gives the Alexander polynomial twisted by the representation $\omega\otimes \pi$.


This method of computing the taut polynomial has significant computational advantages. If $\V$ has $n$ tetrahedra then one can easily find a presentation for $\pi_1(M)$ with $n+1$ generators and $n$ relations using a contraction along a spanning tree of~$\Gamma$; we will do this in Section \ref{sec:fundamental:group}. However, this presentation typically can be simplified to a presentation with much less than $n+1$ generators. 
This reduces the dimensions of matrices that one has to work with. 
In the context of this paper we also get an additional benefit of not having to describe in detail the combinatorics of infinitely many veering triangulations to show that the taut polynomials of all of them vanish; we use that in Proposition \ref{prop:taut:vanishes}. On the other hand, the standard computation of the taut polynomial is used in Section \ref{sec:initial}  to first find good candidates for these  infinitely many veering triangulations.

	\section{Sequence of veering triangulations with a vanishing taut polynomial}\label{sec:main}

	In this section we construct an infinite sequence  $(\V_k)_{k\geq 0}$ of veering triangulations such that for every $k \geq 0$ the taut polynomial $\Theta_k$ of $\V_k$ is equal to zero and
	\begin{equation}\label{eqn:limit:of:tetrahedra}\lim_{k\rightarrow \infty} \left(\text{number of tetrahedra of } \V_k\right) = \infty \end{equation} 

	The idea is relatively simple: 
	\begin{enumerate}
		\item Start with a veering triangulation $\V_0$ whose taut polynomial  vanishes.
		\item Carefully choose a vertical surgery curve $\lambda_0$ in the stable branched surface $\B_0$  of~$\V_0$ so that for every $k \geq 1$ the veering triangulation $\V_k$ obtained from $\V_0$ by the $(1,k)$-vertical surgery along $\lambda_0$ has a vanishing taut polynomial. 
\end{enumerate}

By the definition of a vertical surgery, for every $k \geq 0$ there is a positive integer~$m$ such that the veering triangulation $\V_{k+1}$ has $k\cdot m$ more tetrahedra than $\V_{k}$; see Section~\ref{sec:vertical:surgery}. Thus \eqref{eqn:limit:of:tetrahedra} is necessarily satisfied for $(\V_k)_{k\geq0}$ constructed in this way.

A rough idea to find $\lambda_0$  is as follows. 
Suppose that $\V_0$ has $n$ tetrahedra. Denote its underlying manifold by $M_0$ and set $H_0 = H_1(M_0;\zz)/\text{torsion}$. The condition $\Theta_0 = 0$ implies that for every choice of a spanning tree $Y_0$ of the dual graph $\Gamma_0$ of $\V_0$ at most $n-1$ columns of the reduced branch relations matrix $D_{Y_0}$ are linearly  independent over $\zz \lbrack H_0 \rbrack$. 
Denote by $f_1, \ldots, f_{n+1}$ the non-tree faces of $\V_0$. Without loss of generality we may assume that 
\begin{gather}\label{eqn:linear:dependence}
 \sum\limits_{i=1}^{n} q_i \cdot D_{Y_0}(1\cdot f_i) = 0\\ 
 \label{eqn:linear:dependence:2} q_{n+1} \cdot D_{Y_0}(1\cdot f_{n+1}) + \sum\limits_{i=1}^{n-1} q_i' \cdot D_{Y_0}(1\cdot f_i) = 0
\end{gather} for some $q_i, q_i' \in \zz \lbrack H_0\rbrack $ such that $q_n, q_{n+1} \neq 0$. 

Recall from Section \ref{sec:vertical:surgery} that a vertical surgery changes the structure of the branched surface only in a neighborhood of the surgery curve. Therefore we aim to find a vertical surgery curve $\lambda_0$ which does not interfere with \eqref{eqn:linear:dependence} and  \eqref{eqn:linear:dependence:2}. More precisely, we will associate to \eqref{eqn:linear:dependence} and \eqref{eqn:linear:dependence:2} certain graphs $\beta, \beta'$ embedded in $\B_0$, and will search for~$\lambda_0$ that is disjoint from both of them.

The graph $\beta \subset \B_0$ determined by \eqref{eqn:linear:dependence} is defined as follows. Recall from Figure \ref{fig:switch_relation} that if we arrange $\B_0$ and $\V_0$ so that they are in a dual position, for every face $f$ of~$\V_0$ the intersection $f \cap \B_0$ is a trivalent train track $\tau_f$ with a switch at the intersection $f \cap \Gamma_0$ and three half-branches: each joining the switch with the midpoint of an edge in the boundary of $f$. 
The union
\[\beta = \bigcup_{i=1}^n \lbrace \tau_{f_i} \ | \ q_i \neq 0 \rbrace \subset \B_0.\]
can be seen as a graph in $\B_0$ with the vertex set \[\lbrace f_i \cap \Gamma_0 \ | \ q_i \neq 0 \rbrace \cup \lbrace \sigma(e)\cap e \ | \ e \in \partial f_i \text{ with } q_i \neq 0  \rbrace\]
and whose every edge can be identified with a  half-branch of $\tau_{f_i}$ for  $i$ such that $q_i \neq 0$. We call $\beta$ the \emph{branch cycle} associated to \eqref{eqn:linear:dependence}. This terminology is motivated by the fact that the union of $f_i$ with $q_i \neq \emptyset$ can be seen as a 2-cycle with respect to the `boundary map' $D_0(1)$. 
If a vertex $w$ of $\beta$ contained in the interior of a sector $\sigma$ is two-valent we smoothen out the union of the two half-branches that meet at~$w$ into one smooth edge contained in $\sigma$, and remove $w$ from the vertex set of $\beta$.  Two examples of such smoothened branch cycles are presented in Figure \ref{fig:eesr_sectors_graphs}.

	\subsection{The initial veering triangulation and  a `good' vertical surgery curve}\label{sec:initial}
For $\V_0$ we choose a veering triangulation with the taut signature
\[\texttt{iLLAwQcccedfghhhlnhcqeesr\_12001122}. \]
It has 8 tetrahedra, numbered from 0 to 7, that we illustrate in Figure \ref{fig:eesr_tetrahedra}. 
We denote the stable branched surface of~$\V_0$ by $\B_0$ and the dual graph of $\V_0$ by $\Gamma_0$. Faces of $\V_0$ are denoted by $f_i$, for $i=0,1, \ldots, 15$, and edges of $\V_0$ are denoted by $e_j$, for $j=0, 1,\ldots, 7$. For brevity,  the edge of $\B_0$ that is dual to $f_i$  is denoted by~$\epsilon_i$, and the sector of $\B_0$ that is dual to $e_j$ is denoted by $\sigma_j$.
	\begin{figure}[h]
	\begin{center}
		\includegraphics[width=0.8\textwidth]{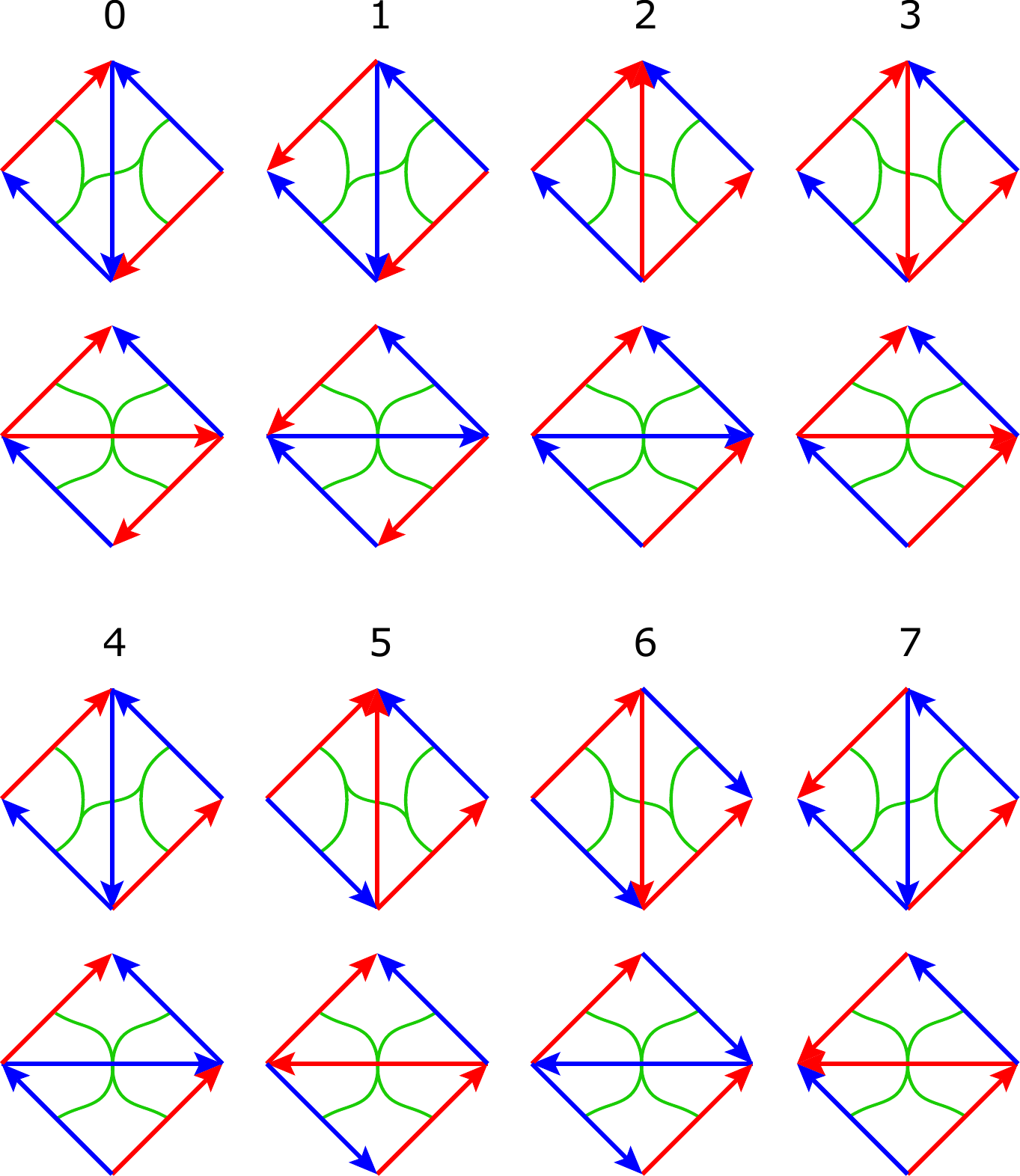} 
		\put(-316,320){$f_3$}
			\put(-276,320){$f_1$}
				\put(-231,320){$f_4$}
					\put(-190,320){$f_5$}
						\put(-146,320){$f_7$}
					\put(-106,320){$f_2$}
					\put(-61,320){$f_0$}
					\put(-21,320){$f_8$}
					\put(-297,255){$f_2$}
					\put(-212,255){$f_6$}
					\put(-126,255){$f_1$}
					\put(-41,255){$f_9$}
					\put(-297,216){$f_0$}
					\put(-212,216){$f_3$}
					\put(-126,216){$f_4$}
					\put(-41,216){$f_7$}
					\put(-318,120){$f_{11}$}
					\put(-277,120){$f_6$}
					\put(-234,120){$f_{12}$}
					\put(-191,120){$f_9$}
					\put(-149,120){$f_{13}$}
					\put(-109,120){$f_{14}$}
					\put(-63,120){$f_{10}$}
					\put(-23,120){$f_{15}$}
					\put(-297,53){$f_5$}
					\put(-213,53){$f_{13}$}
					\put(-128,53){$f_{11}$}
					\put(-43,53){$f_{12}$}
						\put(-299,14){$f_{10}$}
					\put(-211,14){$f_{8}$}
					\put(-128,14){$f_{15}$}
					\put(-43,14){$f_{14}$}
	\end{center}
	\caption{Veering triangulation $\V_0$. Each pair of adjacent triangles represent a pair of top (upper row) or bottom (lower row) faces of a tetrahedron. Each face $f_i$ appears exactly once as a top face of some tetrahedron, and exactly once as a bottom face of some tetrahedron. The identification between these faces is determined by the conditions that their stable train tracks match up and the resulting manifold is orientable.}
	\label{fig:eesr_tetrahedra}
\end{figure}

Let $M_0$ be the manifold underlying~$\V_0$. This manifold appears in the SnapPy census~\cite{snappea} under the name \texttt{t12413}. We have $H_1(M_0; \zz) = \zz \oplus \zz/(16\ \zz)$
and thus $H_0 := H_1(M_0;\zz)/\text{torsion} = \zz$. We denote the generator of $H_0$ by $a$. Thus the taut polynomial $\Theta_0$ of $\V_0$ is an element of $\zz\lbrack a^{\pm1}\rbrack$. One can verify that $\Theta_0 = 0$ using tools available at \cite{VeeringGitHub}. We will analyze the branch relations matrix of $\V_0$ in more detail and use it to find a good vertical surgery curve in $\B_0$.

\begin{figure}[h]
	\begin{center}
		\includegraphics[width=0.236\textwidth]{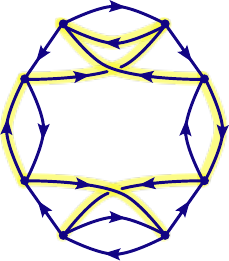} 
		\put(-75,0){\textbf{2}}
		\put(-27,0){\textbf{0}}
		\put(-92,23){\textbf{3}}
		\put(-11,23){\textbf{1}}
		\put(-92,80){\textbf{5}}
		\put(-11,80){\textbf{4}}
		\put(-75,104){\textbf{6}}
		\put(-27,104){\textbf{7}}
		\color{gray}
		\put(-50,-8){1}
		\put(-50,7){2}
		\put(-87,12){7}
		\put(-16,12){3}
		\put(-70,37){0}
		\put(-34,37){4}
		\put(-104,52){8}
		\put(-74,52){9}
		\put(-28,52){5}
		\put(2,52){6}
		\put(-70,65){12}
		\put(-35,65){11}
		\put(-92,92){13}
		\put(-17,92){10}
		\put(-54,113){14}
		\put(-54,96){15}
		\color{black}
	\end{center}
	\caption{The dual graph $\Gamma_0$ of $\V_0$. Edge $\epsilon_i$ is labelled with gray $i$. Vertices are labelled in bold with the number of their dual tetrahedron. The spanning tree $Y_0$ is shaded.}
	\label{fig:eesr_dual_graph}
\end{figure}

The dual graph $\Gamma_0$ of $\V_0$ is presented in Figure \ref{fig:eesr_dual_graph}. As a spanning tree $Y_0$ of $\Gamma_0$ we can choose the subgraph of $\Gamma_0$ consisting of edges  $\epsilon_0, \epsilon_4, \epsilon_6, \epsilon_8, \epsilon_{11}, \epsilon_{12}, \epsilon_{15}$. The $H_0$-pairings of all faces of $\V_0$ are included in Table \ref{tab:H_0-pairings}; see Section \ref{sec:computation} for an explanation of what are $H_0$-pairings and how to compute them. Sectors of $\B_0$ are presented in Figure \ref{fig:eesr_sectors_graphs}. 

\begin{table}[h]
	\begin{tabular}{|>{\columncolor[gray]{0.9}}c|c|c|c|>{\columncolor[gray]{0.9}}c|c|>{\columncolor[gray]{0.9}}c|c|>{\columncolor[gray]{0.9}}c|c|c|>{\columncolor[gray]{0.9}}c|>{\columncolor[gray]{0.9}}c|c|c|>{\columncolor[gray]{0.9}}c|}
		\hline
		$f_0$ & $f_1$ & $f_2$ & $f_3$ & $f_4$ & $f_5$ & $f_6$ & $f_7$& $f_8$ & $f_9$ & $f_{10}$ & $f_{11}$& $f_{12}$ & $f_{13}$ & $f_{14}$ & $f_{15}$ \\ \hline 
		1 & 1 & 1 & 1 & 1 & $a$ & 1 & $a$ & 1 & $a$ & $a$ & 1 & 1 & 1 & 1 & 1\\ \hline
	\end{tabular}
	\vspace{0.3cm}
	\caption{$H_0$-pairings of faces of $\V_0$. Columns associated to tree faces are shaded.}
	\label{tab:H_0-pairings}
\end{table}

%

Let
\[\hspace*{-1.2cm} D_0 = \begin{blockarray}{*{16}{c} l}
\begin{block}{*{16}{>{$\normalsize}c<{$}} l}
\circled{$f_0$} & $f_1$ & $f_2$ & $f_3$ &\circled{$f_4$}&$f_5$&\circled{$f_6$}&$f_7$&\circled{$f_8$}&$f_9$&$f_{10}$&\circled{$f_{11}$}&\circled{$f_{12}$}&$f_{13}$&$f_{14}$& \circled{$f_{15}$}\\
\end{block}
\begin{block}{[cccccccccccccccc]>{$\footnotesize}l<{$}}
-a  &  -a  &   0   &   0  &    0 &     0  &    0  &   1  &    -a &    1   &   0  &   0   &   0     & 0   &   0  &  0\\
1   &   0  &   1   &   0  &   -1  &    0  &   -1  &    0   &   0  &    -1  &   -1   &   0  &    -a   &   0  &    0   &   0\\
0 &0 &-1&-1&1 &0&0&0&0&0&0&0&0&0&0&0\\
-1   &  0 &    0 &   1 &    -a   &  -1  &  1  &  -1  &   0  &   0  &   0  &   0  &   0  &   0  &   0  &   0\\
0  &   0   &  -a  &  -a   &  0  &  -1  &  0 &   -1  &  1  &   0  &   0  &  -1  &   0 &  1  &   0  &   0\\
0  &    0  &    0   &   0   &   0   &  1   &   -a  &    0   &   0  &    0   &  1  &    -a  &    0  &    0  &   -a  &    0\\
0  &    0  &    0  &    0    &  0 &     0  &    0  &    0  &   -1 &    -1 &     -1 &    1  &   -1   &  -a   &   0 & 1 - a \\
0  &    0  &    0   &   0  &    0  &    0  &    0  &    0  &    0   &   0   &   0  &    0 &    1    &  -1  &    0  &   -1\\
\end{block}
\end{blockarray}\]

Following the computation outlined in Section \ref{sec:computation}, using information  from Table \ref{tab:H_0-pairings} and  Figure \ref{fig:eesr_sectors_graphs}, leads to a conclusion that $D_0$ is the full branch relations matrix of $\V_0$.
By Fact \ref{fact:via:tree}, the taut polynomial $\Theta_0$ of $\V_0$  is the greatest common divisor of the maximal minors of the matrix $D_{Y_0}$ obtained from $D_0$ by deleting the columns corresponding to tree faces. For the convenience of the reader, the labels of these columns are encircled.

Since $\Theta_0 = 0$, at most 7 columns of $D_{Y_0}$ are linearly independent over $\zz\lbrack a^{\pm 1} \rbrack$. Indeed, we have
\begin{align}
\label{eqn:f10}
D_0(1\cdot f_{10}) &= D_0(1\cdot f_5) - D_0(1\cdot f_7) + D_0(1\cdot f_9),\\
\label{eqn:f14}
D_0(1\cdot f_{14}) &=  D_0(1\cdot f_1) - a\cdot D_0(1\cdot f_5) + a\cdot D_0(1\cdot f_7).
\end{align}

These linear dependencies determine two branch cycles $\beta, \beta'$ embedded in $\B_0$. We present them in Figure \ref{fig:eesr_sectors_graphs}.
\begin{figure}[h]
	\begin{center}
		\includegraphics[width=0.9\textwidth]{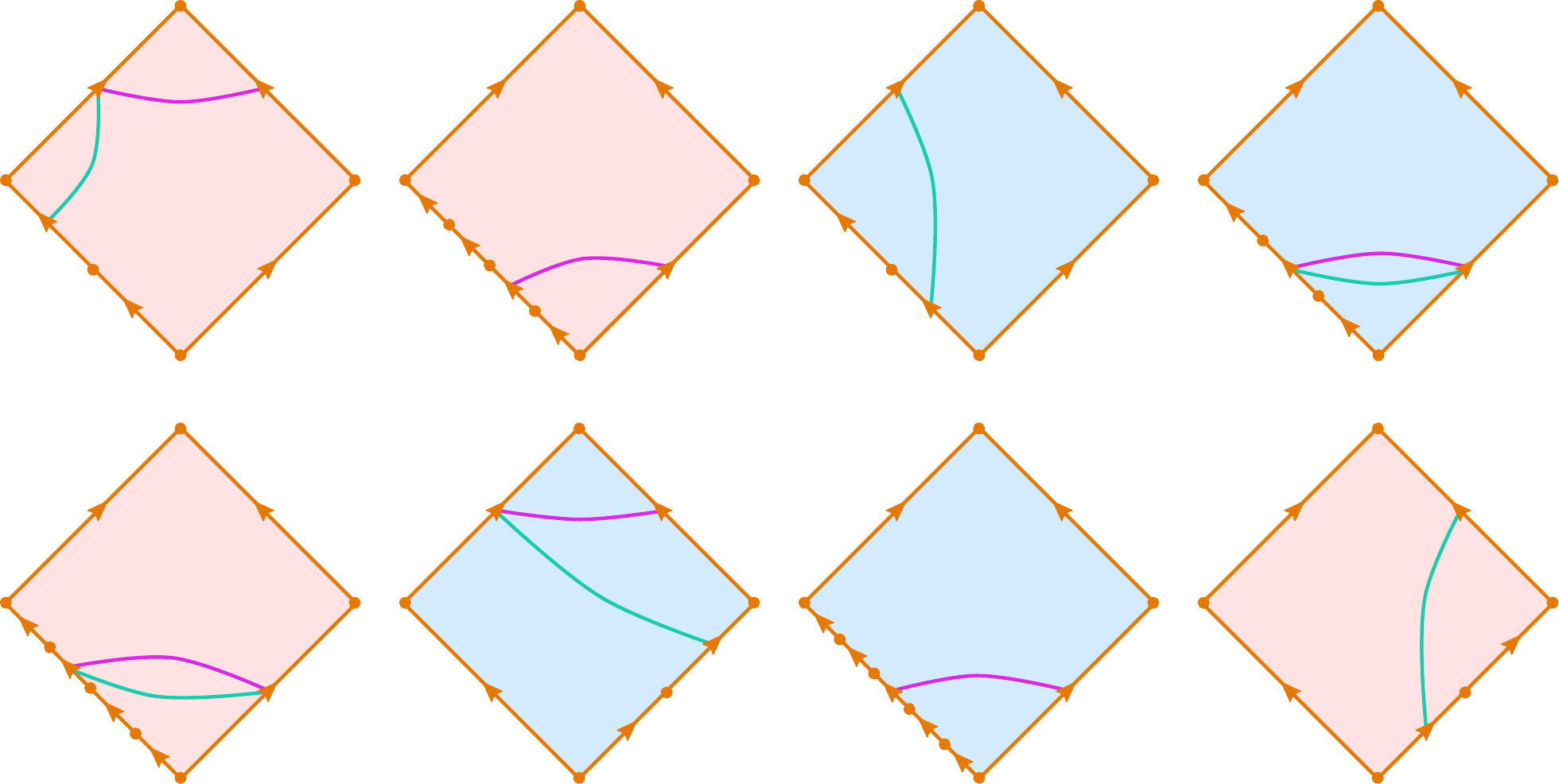} 
				\put(-331,139){$\sigma_0$}
				\put(-235,139){$\sigma_1$}
				\put(-141,139){$\sigma_2$}
				\put(-47,139){$\sigma_3$}
					\put(-331,36){$\sigma_4$}
				\put(-235,36){$\sigma_5$}
				\put(-141,36){$\sigma_6$}
				\put(-47,36){$\sigma_7$}
		\put(-360,165){$\epsilon_7$}
		\put(-302,165){$\epsilon_9$}
		\put(-266,165){$\epsilon_2$}
		\put(-208,165){$\epsilon_0$}
		\put(-170,165){$\epsilon_1$}
		\put(-114,165){$\epsilon_4$}
		\put(-76,165){$\epsilon_3$}
		\put(-19,165){$\epsilon_6$}
		\put(-348,105){$\epsilon_0$}
		\put(-368,125){$\epsilon_1$}
		\put(-301,117){$\epsilon_8$}
		\put(-252,100){$\epsilon_{12}$}
		\put(-262,110){$\epsilon_{10}$}
		\put(-270,120){$\epsilon_{6}$}
		\put(-280,130){$\epsilon_{4}$}
		\put(-207,117){$\epsilon_9$}
		\put(-158,105){$\epsilon_1$}
		\put(-181,127){$\epsilon_2$}
		\put(-112 ,117){$\epsilon_3$}
			\put(-61,102){$\epsilon_4$}
		\put(-75,115){$\epsilon_7$}
		\put(-88,129){$\epsilon_0$}
			\put(-18 ,117){$\epsilon_5$}
			\put(-363,65){$\epsilon_{13}$}
		\put(-302,65){$\epsilon_8$}
		\put(-266,65){$\epsilon_5$}
		\put(-208,65){$\epsilon_{10}$}
		\put(-174,65){$\epsilon_{15}$}
		\put(-114,65){$\epsilon_{11}$}
		\put(-80,65){$\epsilon_{12}$}
		\put(-19,65){$\epsilon_{14}$}
		\put(-343,1){$\epsilon_2$}
		\put(-353,11){$\epsilon_3$}
		\put(-363,21){$\epsilon_5$}
		\put(-376,31){$\epsilon_{11}$}
		\put(-301,17){$\epsilon_7$}
		\put(-264,17){$\epsilon_6$}
			\put(-216,8){$\epsilon_{11}$}
		\put(-197, 27){$\epsilon_{14}$}
			\put(-156,0){$\epsilon_{15}$}
		\put(-165,9){$\epsilon_{13}$}
		\put(-169,17){$\epsilon_9$}
			\put(-178,25){$\epsilon_8$}
		\put(-188,33){$\epsilon_{12}$}
		\put(-113,17){$\epsilon_{10}$}
		\put(-80,17){$\epsilon_{13}$}
			\put(-27,8){$\epsilon_{14}$}
		\put(-8,27){$\epsilon_{15}$}
\put(-30,24){$\gamma$}
	\end{center}
	\caption{Sectors of $\B_0$. Pink: the smoothened branch cycle determined by~\eqref{eqn:f10}. Teal: the smoothened branch cycle determined by \eqref{eqn:f14}. Curve $\gamma$ contained in $\sigma_7$ is the core curve of a M\"obius band whose boundary we choose as the vertical surgery curve.
	}
	\label{fig:eesr_sectors_graphs}
\end{figure}

We can now use a characterization of vertical surgery curves (Lemma \ref{lem:vertical:surgery:curves}) to deduce that there are precisely two primitive vertical surgery curves which are disjoint from $\beta \cup \beta'$. Namely, observe that each of the sectors $\sigma_2, \sigma_7$ contains a single edge of $\beta \cup \beta'$ which is the core curve of a M\"obius band embedded in $\B_0$. The boundary of either of these M\"obius bands can be chosen to be our vertical surgery curve $\lambda_0$. We denote by~$\gamma$ the edge of $\beta \cup \beta'$ contained in $\sigma_7$ and set
\[\lambda_0 = \text{the boundary of the M\"obius band with core curve $\gamma$}.\]

 For $k \geq 1$ let $\V_k$ be the veering triangulation obtained from $\V_0$ by peforming the $(1,k)$-vertical surgery along $\lambda_0$. In the remaining sections we will show that for every $k\geq0$ the taut polynomial of $\V_k$ vanishes. Local pictures of the first three surgered veering branched surfaces are illustrated in Figure \ref{fig:surgeries}. The taut signatures of $\V_k$ for $0\leq k \leq 10$ are listed in Table \ref{tab:signatures}. 
 
 \begin{remark}
 	Veering triangulation $\V_0$ carries twice-punctured tori. Cutting $\V_0$ along a carried twice-punctured torus yields a sutured manifold whose guts consist of two solid tori. Each of these solid tori contains one of the two possible vertical surgery curves that we found above.
 \end{remark}
\begin{figure}[h]
	\begin{center}
	\includegraphics[scale=1.2]{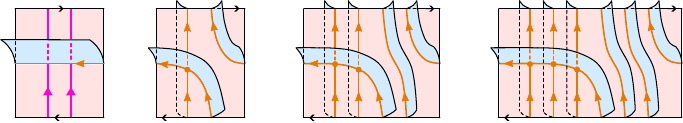}
	\put(-383, 25){$\epsilon_{14}$}
	\put(-350,10){$\lambda_0$}
	\put(-365,-10){$\B_0$}
		\put(-282,-10){$\B_1$}
			\put(-185,-10){$\B_2$}
				\put(-60,-10){$\B_3$}
	\end{center}
	\caption{From the left: a local picture of $\B_0$ near $\lambda_0$, then a local picture of $\B_1$, $\B_2$, $\B_3$, respectively.}
\label{fig:surgeries}
\end{figure}
	\begin{table}[h]
		\hspace*{-0.5cm}\begin{tabular}{|c|l|}
			\hline
			$k$ & \multicolumn{1}{c|}{taut signature of $\V_k$} 
			\\ \hline
			0 & \texttt{iLLAwQcccedfghhhlnhcqeesr\_12001122}\\ \hline 
			1& \texttt{jLLAMLQacdefgihiijkaaxxumro\_200210022}\\ \hline
			2& \texttt{kLLAwAAkccedfghhijjlnhcqeeraqj\_1200112202}\\ \hline
			3& \texttt{lLALPzwQcbcbegfhikjkkhhwsaaqqitks\_21122012200}\\ \hline
			4& \texttt{mLAwLAMLQcbbdfhgijlkllhhrhjaaqqitks\_122122012200}\\ \hline
			5& \texttt{nLAMLLMzPkbcbdeghijklmmmxxnxxjaaqqulrj\_0110100210022}\\ \hline
			6& \texttt{oLAMzLPzwQcbcbdefhjiklnmnnhhwhhhkaaxxmlsk\_21121211021100}\\ \hline
			7& \texttt{pLAMzwLAMLQcbbdefgikjlmonoohhrhhhhnaaxxmlsk\_122121211021100}\\ \hline
			8& \texttt{qLAMzMLLMzPkbcbdefghjklmnpoppxxnxxxxxraahhptjr\_0110101011201122}\\ \hline
			9& \texttt{rLAMzMzLPzwQcbcbdefghikmlnoqpqqhhwhhhhhhsaaqqitks\_21121212122012200}\\ \hline
			10& \texttt{sLAMzMzwLAMLQcbbdefghijlnmoprqrrhhrhhhhhhhjaaqqitks\_122121212122012200}\\ \hline
		\end{tabular}
		\vspace{0.1cm}
		\caption{The taut signatures of $\V_k$ for $0\leq k \leq 10$.}
		\label{tab:signatures}
	\end{table}
	\subsection{The strategy}\label{sec:strategy}
	For $k\geq 0$ let $M_k$ be the manifold underlying $\V_k$. Set
	\[H_k: = H_1(M_k;\zz)/\text{torsion}.\]
	Denote by $\pi_k: \pi_1(M_k) \rightarrow H_k$ the abelianization and torsion-killing epimorphism. 
	Let~$\Theta_k$ be the taut polynomial of $\V_k$. To compute $\Theta_k$ we will use  Theorem~\ref{thm:taut:is:twisted}  which says that
	\[\Theta_k = \Delta_{M_k}^{\omega_k\otimes \pi_k},\]
where $\omega_k: \pi_1(M_k) \rightarrow \lbrace -1, 1 \rbrace$ is the orientation homomorphism determined by the stable branched surface $\B_k$ of $\V_k$. As explained in Section \ref{sec:computation:fox}, if $J_k$ is the Jacobian of some presentation of $\pi_1(M_k)$ with $m$ generators then $\Delta_{M_k}^{\omega_k\otimes \pi_k}$ is the greatest common divisor of the $(m-1)\times (m-1)$ minors of $J_k^{\omega_k\otimes \pi_k}$. 

We will express every $M_k$ as a Dehn filling of a single manifold
\[\mathring{M}= M_0 \bez \lambda_0.\] 
Then $J_k$ can be obtained from the Jacobian $\mathring{J}$ of some  presentation of $\pi_1(\mathring{M})$ by adding one row. We will furthermore show that the twisted Alexander matrix $J_k^{\omega_k\otimes \pi_k}$ of $M_k$ can be obtained from a certain twisted Alexander matrix $\mathring{J}^{\mathring{\omega}\otimes \mathring{\pi}_k}$ of $\mathring{M}$ by adding one row. The conclusion that $\Theta_k= 0$ will follow from an observation that $\mathring{J}^{\mathring{\omega}\otimes \mathring{\pi}_k}$ is a zero matrix; see Proposition \ref{prop:taut:vanishes}. 

\subsection{Blowing up $\B_0$ along $\lambda_0$}\label{sec:blowing-up}
Let $A$ be a small smooth regular neighborhood of~$\lambda_0$ which is an annulus. Doubling $A$ produces a tube $T_0$ with two sutures $\lambda, \lambda'$ arising from the boundary components of $A$. There are two connected components of \mbox{$T_0 \bez (\lambda \cup \lambda')$}; we denote them by $A, A'$. We smoothen the obtained 2-complex into a branched surface~$\mathring{\B}$ by requiring that the edges of sutures of $T_0$ have $A$ and $A'$ on their two-sheeted sides. We say that $\mathring{\B}$ is obtained from $\B_0$ by \emph{blowing it up} along~$\lambda_0$. 
A local picture of~$\mathring{\B}$ is presented on the left side of Figure \ref{fig:drilled_eesr}. 
By construction,~$\mathring{\B}$ is a branched surface embedded in~$\mathring{M}$ which is a deformation retract of $\mathring{M}$.

\begin{warning}
	The branched surface $\mathring{\B}$ is not veering. The local picture of $\mathring{\B}$ near the common vertex of edges $\epsilon_{16}, \epsilon_{18}, \epsilon_{19}, \epsilon_{23}$ is neither as in Figure \ref{fig:veering_branched_surface} (a) nor as in Figure~\ref{fig:veering_branched_surface}~(b). Similarly for the common vertex of edges $\epsilon_{17}$, $\epsilon_{20}$, $\epsilon_{21}$, $\epsilon_{22}$.
\end{warning}

	\begin{figure}[h]
	\begin{center}
		\begin{minipage}{0.48\textwidth}
			\centering
			\includegraphics[width=0.8\textwidth]{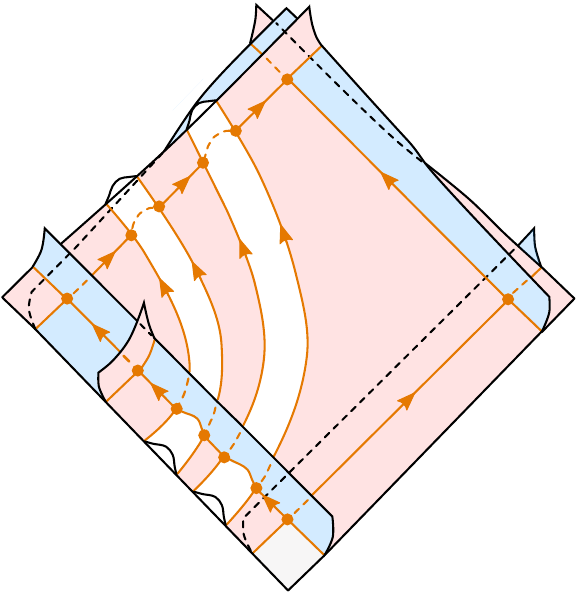} 
		\end{minipage}
		\begin{minipage}{0.48\textwidth}
			\centering
			\includegraphics[width=0.5\textwidth]{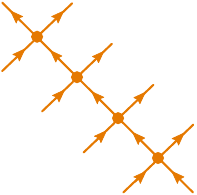}		
			\put(-48,-8){$\epsilon_{18}$}
			\put(0,-8){$\epsilon_{14}$}
			\put(0,38){$\epsilon_{19}$}
			\put(-20,58){$\epsilon_{22}$}
			\put(-40,78){$\epsilon_{21}$}
			\put(-60,98){$\epsilon_{18}$}
			\put(-113,98){$\epsilon_{16}$}
			\put(-113,56){$\epsilon_{19}$}
			\put(-93,36){$\epsilon_{22}$}
			\put(-73,16){$\epsilon_{21}$}
			\put(-43,20){$\epsilon_{17}$}
			\put(-63,40){$\epsilon_{20}$}
			\put(-83,60){$\epsilon_{23}$}
		\end{minipage}
	\end{center}
	\caption{Left: The result of blowing up $\B_0$ along $\lambda_0$. Orientations of some edges are omitted; they all point upwards. Right: Labels on the new edges of~$\mathring{\B}$. The remaining edges of $\mathring{\B}$ inherit their labels from $\B_0$.}
	\label{fig:drilled_eesr}
\end{figure}

The branched surface $\mathring{\B}$ has two applications. First, in Section \ref{sec:fundamental:group} we will use it to find a presentation for $\pi_1(\mathring{M})$. Furthermore, associated to $\mathring{\B}$ there is an orientation homomorphism $\mathring{\omega}: \pi_1(\mathring{M}) \rightarrow \lbrace -1, 1 \rbrace$. In Lemma \ref{lem:omega} we show that $\mathring{\omega}$ factors through the orientation homomorphism $\omega_k:\pi_1(M_k) \rightarrow \lbrace -1, 1\rbrace$ of $\B_k$ for every $k \geq 0$. This fact is later used in Proposition \ref{prop:taut:vanishes}.

\subsection{The fundamental group of $M$}\label{sec:fundamental:group}
Since $\mathring{\B}$ is a deformation retract of $\mathring{M}$, we can use it to find a presentation for $\pi_1(\mathring{M})$. Observe that $\mathring{\B}$ differs from $\B_0$ only in a small neighborhood of $\lambda_0$. Since  $\epsilon_{14}$ is the only edge of $\B_0$ that is traversed by $\lambda_0$, all new vertices of $\mathring{\B}$ appear along  $\epsilon_{14}$. On the right side of Figure \ref{fig:drilled_eesr} we included labels and orientations of all edges of~$\mathring{\B}$ that are adjacent to the new vertices of $\mathring{\B}$. With that notation, the edge $\epsilon_{14}$ of $\B_0$ breaks down into an edge-path $\epsilon_{14}\epsilon_{17}\epsilon_{20}\epsilon_{23}\epsilon_{16}$ in the 1-skeleton of $\mathring{\B}$. The remaining new edges of $\mathring{\B}$ form the sutures of $T_0$.

	The only sectors of $\B_0$ that are affected by the blow-up  are those in which $\epsilon_{14}$ is embedded. Thus for $i\in \lbrace0, 1, 2, 3, 4, 6\rbrace $  the branched surface $ \mathring{\B}$ has a sector  which looks exactly like the sector $\sigma_{i}$ of $\B_0$ presented in Figure \ref{fig:eesr_sectors_graphs}.  Furthermore, $\mathring{\B}$ has a sector obtained from the sector $\sigma_5$  of $\B_0$ by replacing its edge $\epsilon_{14}$ with an edge-path $\epsilon_{14}\epsilon_{17}\epsilon_{20}\epsilon_{23}\epsilon_{16}$. The sector $\sigma_7$ of $\B_0$ breaks down into three sectors of~$\mathring{\B}$; see Figure~\ref{fig:drilled_eesr}. Finally, $\mathring{\B}$ has two more sectors: one contained in  $A \subset T_0$, and one contained in $A' \subset T_0$.
	
		Let $\mathring{\Gamma}$ be the 1-skeleton of $\mathring{\B}$. Recall from Section \ref{sec:initial} that edges $\epsilon_0$, $\epsilon_4$, $\epsilon_6$, $\epsilon_8$, $\epsilon_{11}$, $\epsilon_{12}$ and $\epsilon_{15}$ formed a spanning tree $Y_0$ of $\Gamma_0$. All these edges survive the blow-up of $\B_0$ to $\mathring{\B}$, and thus we can see $Y_0$ as a subtree of $\mathring{\Gamma}$. We complete it to a spanning tree $\mathring{Y}$ of $\mathring{\Gamma}$ by adding edges $\epsilon_{14}$, $\epsilon_{17}$, $\epsilon_{20}$ and $\epsilon_{23}$; see Figure \ref{fig:drilled_dual_graph}. 
		
		\begin{figure}[h]
			\begin{center}
				\includegraphics[width=0.236\textwidth]{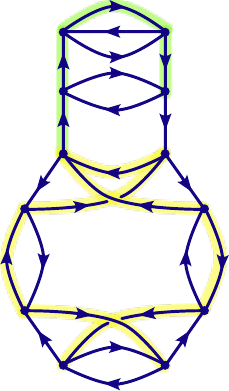} 
				\put(-75,0){\textbf{2}}
				\put(-27,0){\textbf{0}}
				\put(-92,23){\textbf{3}}
				\put(-11,23){\textbf{1}}
				\put(-92,80){\textbf{5}}
				\put(-11,80){\textbf{4}}
				\put(-79,101){\textbf{6}}
				\put(-25,101){\textbf{7}}
				\put(-79,123){\textbf{8}}
			\put(-24,123){\textbf{11}}
				\put(-79,148){\textbf{9}}
			\put(-24,148){\textbf{10}}
				\color{gray}
				\put(-50,-8){1}
				\put(-50,7){2}
				\put(-87,12){7}
				\put(-16,12){3}
				\put(-70,37){0}
				\put(-34,37){4}
				\put(-104,52){8}
				\put(-74,52){9}
				\put(-28,52){5}
				\put(2,52){6}
				\put(-70,65){12}
				\put(-35,65){11}
				\put(-92,92){13}
				\put(-17,92){10}
				\put(-84,112){14}
				\put(-54,96){15}
				\put(-24,112){16}
				\put(-84,136){17}
				\put(-54,107){18}
					\put(-54,123){19}
					\put(-54,167){20}
				\put(-54,152.5){21}
					\put(-54,142.5){22}
				\put(-23,136){23}
				\color{black}
			\end{center}
			\caption{The 1-skeleton $\mathring{\Gamma}$ of $\mathring{\B}$. Edge $\epsilon_i$ is labelled with gray $i$. Vertices are labelled in bold by the number of their dual tetrahedron. The tree~$Y_0$ is shaded yellow. The edges completing $Y_0$ to a spanning tree $\mathring{Y}$ of $\mathring{\Gamma}$ are shaded green.}
			\label{fig:drilled_dual_graph}
		\end{figure}
		
		For every non-tree edge $\epsilon_i$ let $\zeta_i$ be the unique cycle in $\mathring{Y}\cup \epsilon_i$. Then $\pi_1(\mathring{\B})$ is generated by   $\zeta_1, \zeta_2, \zeta_3, \zeta_5,\zeta_7,  \zeta_9, \zeta_{10},   \zeta_{13},   \zeta_{16},  \zeta_{18}, \zeta_{19},  \zeta_{21}, \zeta_{22} $ satisfying the following relations
		\begin{multicols}{3}
\begin{center}
$\zeta_7^{-1}\zeta_1^{-1}\zeta_9,$ \\
$\zeta_2^{-1}\zeta_{10}^{-1}\zeta_9, $\\
$\zeta_1^{-1}\zeta_2^{-1}\zeta_1^{-1}\zeta_3, $\\
$\zeta_3^{-1}\zeta_7^{-1}\zeta_5, $
\end{center}

\begin{center}
$\zeta_{13}^{-1}\zeta_5^{-1}\zeta_3^{-1}\zeta_2^{-1}\zeta_7, $\\
$\zeta_5^{-1}\zeta_{16}\zeta_{10},$\\
$\zeta_9^{-1}\zeta_{13}^{-1}\zeta_{10}, $\\
$\zeta_{13}^{-1}\zeta_{19}\zeta_{16}, $
\end{center}

\begin{center}
$\zeta_{16}^{-1}\zeta_{18}, $\\
$\zeta_{18}^{-1}\zeta_{19}^{-1}\zeta_{22}\zeta_{21}, $\\
$\zeta_{19}^{-1}\zeta_{18}^{-1}\zeta_{21}\zeta_{22},$\\ 
$\zeta_{21}^{-1}\zeta_{22}. $
\end{center}
		\end{multicols}
	
	We used SageMath \cite{sagemath} to simplify this presentation into the following one
	\begin{equation}\label{eqn:drilled:presentation}
	\left\langle 
	\zeta_1, \zeta_{10}, \zeta_{21} \middle\vert\,
	\begin{array}{c} \zeta_{21}^{-2}\zeta_1^{-1}\left(\zeta_{10}^{-1}\zeta_{21}^2\zeta_{10}\zeta_1^{-1}\zeta_{10}^{-1}\zeta_{21}^2\right)^2\zeta_{10}\zeta_1^{-1}\zeta_{21}^{-2}\zeta_{10}, \\ \zeta_{21}^{-2}\left(\zeta_{10}\zeta_1^{-1}\zeta_{10}^{-1}\zeta_{21}^2\right)^2\zeta_1\zeta_{21}^2\zeta_1^{-1}\left(\zeta_{21}^{-2}\zeta_{10}\zeta_1\zeta_{10}^{-1}\right)^2 
	\end{array}
	\right\rangle.
	\end{equation}
	
The simplification isomorphism is given by
\begin{align}
&\zeta_1 \mapsto \zeta_1 &&\zeta_{13} \mapsto \zeta_{21}^2 \nonumber\\
&\zeta_2 \mapsto \zeta_{10}^{-1}\zeta_{21}^{-2}\zeta_{10} && \zeta_{16} \mapsto \zeta_1^{-1}\left(\zeta_{21}^{-2}\zeta_{10}\zeta_1\zeta_{10}^{-}\right)^2 \nonumber\\
&\zeta_3 \mapsto \zeta_1\zeta_{10}^{-1}\zeta_{21}^{-2}\zeta_{10}\zeta_1 && \zeta_{18} \mapsto \zeta_1^{-1}\left(\zeta_{21}^{-2}\zeta_{10}\zeta_1\zeta_{10}^{-1}\right)^2\nonumber\\
&\zeta_5 \mapsto \zeta_1^{-1}\zeta_{21}^{-2}\zeta_{10}\zeta_1\zeta_{10}^{-1}\zeta_{21}^{-2}\zeta_{10}\zeta_1 &&\zeta_{19} \mapsto\left(\zeta_{10}\zeta_1^{-1}\zeta_{10}^{-1}\zeta_{21}^2\right)^2\zeta_1\zeta_{21}^2 \label{eqn:simplification}\\
&\zeta_7 \mapsto \zeta_1^{-1}\zeta_{21}^{-2}\zeta_{10}&&\zeta_{21} \mapsto \zeta_{21}\nonumber\\
&\zeta_9 \mapsto \zeta_{21}^{-2}\zeta_{10} &&\zeta_{22} \mapsto \zeta_{21}\nonumber\\
&\zeta_{10} \mapsto \zeta_{10}& &\nonumber
\end{align}

%
Since the relations in \eqref{eqn:drilled:presentation} are given by relatively long words, the Jacobian of \eqref{eqn:drilled:presentation} has very long entries. For this reason, we list them in Appendix \ref{sec:AppendixA} instead of in the main text. Note that the Jacobian can be found using \texttt{alexander\_matrix()} function in SageMath \cite{sagemath}.

\subsection{Projection $\pi_1(\mathring{M}) \rightarrow H_k$}\label{sec:projection:to:H_k}
Recall from Section \ref{sec:strategy} that $H_k$ denotes \linebreak $H_1(M_k;\zz)/\text{torsion}$ and $\pi_k: \pi_1(M_k) \rightarrow H_k$ is the abelianization and torsion-killing projection. Let \mbox{$\psi_k: \pi_1(\mathring{M}) \rightarrow \pi_1(M_k)$}  be the epimorphism induced by Dehn filling $\mathring{M}$ to $M_k$. Set
\begin{equation}\label{eqn:projection:to:Hk:def}
\mathring{\pi}_k: = \pi_k \circ \psi_k: \pi_1(\mathring{M}) \rightarrow H_k.
\end{equation}

\begin{lemma} \label{lem:projection:to:H_k}
	Let $k\geq 0$. 
	\begin{enumerate}
	\item $H_k \cong \zz = \langle c \ |  \ - \rangle$. 
	\item $\mathring{\pi}_k(\zeta_1) = 1$, $\mathring{\pi}_k(\zeta_{10}) = c$, $\mathring{\pi}_k(\zeta_{21}) = 1$.
	\end{enumerate}
	In particular, $\mathring{\pi}_k$ does not depend on $k$.
\end{lemma}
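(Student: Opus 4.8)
\emph{Plan.} I would compute $H_1(M_k;\zz)$ from a group presentation and keep track of the images of $\zeta_1,\zeta_{10},\zeta_{21}$; part~(1) and the assertion that $\mathring{\pi}_k$ does not depend on $k$ then come for free, since part~(2) will be a statement uniform in $k$. First I abelianize the presentation \eqref{eqn:drilled:presentation}: an exponent--sum count shows that its second relator is trivial in homology and that the first equals $4\bigl([\zeta_{21}]-[\zeta_1]\bigr)$, so
\[
H_1(\mathring{M};\zz)\;\cong\;\zz\langle[\zeta_1]\rangle\ \oplus\ \zz\langle[\zeta_{10}]\rangle\ \oplus\ (\zz/4)\bigl\langle[\zeta_{21}]-[\zeta_1]\bigr\rangle,
\]
with $[\zeta_{10}]$ spanning a free $\zz$--summand and $[\zeta_{21}]$ differing from $[\zeta_1]$ by a $4$--torsion element.

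Next, since $M_k$ is the $(1,k)$--Dehn surgery of $M_0$ along $\lambda_0$, it is a Dehn filling of $\mathring{M}=M_0\bez\lambda_0$ along a slope $s_k$, and by the definition of a $(1,k)$--surgery the classes $[s_k]\in H_1(\mathring{M};\zz)$ form an arithmetic progression $[s_k]=[s_0]+k\,[\lambda_0]$, where filling $\mathring{M}$ along $s_0$ gives back $M_0$. As $s_k$ bounds a disc in the filling solid torus, the filling epimorphism $\psi_k$ kills $[s_k]$, whence $H_1(M_k;\zz)\cong H_1(\mathring{M};\zz)/\langle[s_0]+k[\lambda_0]\rangle$. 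The heart of the argument is then to locate $[s_0]$ and $[\lambda_0]$ inside $H_1(\mathring{M};\zz)$, the point being that both have vanishing $[\zeta_{10}]$--coordinate. The curve $\lambda_0$ is isotopic to a suture of the tube $T_0$, hence a loop built out of two of the new edges $\epsilon_{18},\epsilon_{19},\epsilon_{21},\epsilon_{22}$; since the simplification isomorphism \eqref{eqn:simplification} expresses each of $\zeta_{18},\zeta_{19},\zeta_{21},\zeta_{22}$ by a word with trivial $\zeta_{10}$--exponent sum, $[\lambda_0]$ lies in $\zz\langle[\zeta_1]\rangle\oplus(\zz/4)\langle[\zeta_{21}]-[\zeta_1]\rangle$, and tracing the suture gives $[\lambda_0]=2[\zeta_{21}]$. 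For $s_0$: using Table \ref{tab:H_0-pairings}, transported through the subdivision $\epsilon_{14}\mapsto\epsilon_{14}\epsilon_{17}\epsilon_{20}\epsilon_{23}\epsilon_{16}$ (and the relation $\zeta_{13}=\zeta_{21}^2$ from \eqref{eqn:simplification} to deal with $\zeta_{21}$), one gets $\mathring{\pi}_0(\zeta_1)=\mathring{\pi}_0(\zeta_{21})=1$ and $\mathring{\pi}_0(\zeta_{10})=a$, so the composite $H_1(\mathring{M};\zz)\to H_0=\zz$ annihilates $[\zeta_1]$ and $[\zeta_{21}]$ and carries $[\zeta_{10}]$ to a generator; its kernel is therefore exactly $\zz\langle[\zeta_1]\rangle\oplus(\zz/4)\langle[\zeta_{21}]-[\zeta_1]\rangle$. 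Since $[s_0]$ dies in $H_1(M_0;\zz)$ it lies in this kernel, and comparing $H_1(\mathring{M};\zz)/\langle[s_0]\rangle$ with $H_1(M_0;\zz)=\zz\oplus\zz/16$ forces $[s_0]=4[\zeta_1]+m\bigl([\zeta_{21}]-[\zeta_1]\bigr)$ for some odd $m$, the sign of the $[\zeta_1]$--coordinate being pinned down (positive) by the orientation conventions on $\lambda_0$ and on the meridian $\mu$.

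It remains to take the quotient. The element $[s_0]+k[\lambda_0]=(4+2k)[\zeta_1]+(m+2k)\bigl([\zeta_{21}]-[\zeta_1]\bigr)$ lies in $\zz\langle[\zeta_1]\rangle\oplus(\zz/4)\langle[\zeta_{21}]-[\zeta_1]\rangle$ and has $[\zeta_1]$--coordinate $4+2k\neq0$ for every $k\ge0$. Hence in $H_1(M_k;\zz)=H_1(\mathring{M};\zz)/\langle[s_0]+k[\lambda_0]\rangle$ the class $[\zeta_1]$ becomes torsion, and so does $[\zeta_{21}]=[\zeta_1]+\bigl([\zeta_{21}]-[\zeta_1]\bigr)$, whereas $[\zeta_{10}]$ still spans a free $\zz$--summand; in fact $H_1(M_k;\zz)\cong\zz\oplus\zz/(8k+16)$. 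Consequently $H_k=H_1(M_k;\zz)/\mathrm{torsion}\cong\zz$, and, letting $c$ be the image of $[\zeta_{10}]$, we obtain $\mathring{\pi}_k(\zeta_1)=\mathring{\pi}_k(\zeta_{21})=1$ and $\mathring{\pi}_k(\zeta_{10})=c$, with no dependence on $k$.

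The two abelianizations and the final bookkeeping in finitely generated abelian groups are routine; the real obstacle is identifying $[s_0]$ and $[\lambda_0]$ --- that is, tracing the meridian and the longitude of the drilled curve through the cell structure of $\mathring{\B}$ (Figures \ref{fig:drilled_eesr} and \ref{fig:drilled_dual_graph}) carefully enough, including getting the sign of the $[\zeta_1]$--coordinate of $[s_0]$ right, since it is precisely this sign that keeps $b_1(M_k)=1$ for every $k\ge0$ rather than letting it jump for some $k$. A variant that sidesteps the homological shortcut is to write down the Dehn--filling relator $w_k$ as a word in $\zeta_1,\zeta_{10},\zeta_{21}$ (it is $w_0$ times the $k$--th power of the word representing $\lambda_0$), adjoin it to the two relators of \eqref{eqn:drilled:presentation}, and read $H_1(M_k;\zz)$ off the resulting integer matrix via Smith normal form, using the case $k=0$ to calibrate $w_0$ against $H_1(M_0;\zz)$.
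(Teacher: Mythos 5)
Your overall strategy is the same as the paper's: abelianize \eqref{eqn:drilled:presentation} (your computation $r_2\mapsto 0$, $r_1\mapsto 4([\zeta_{21}]-[\zeta_1])$ agrees with the paper's cokernel matrix, giving $H_1(\mathring{M};\zz)\cong\zz\oplus\zz\oplus\zz/4$ with $[\zeta_{10}]$ spanning a free summand), write the filling slope as $[s_k]=[\mu]+k[\lambda]$, and quotient. Where you diverge is in how $[\mu]=[s_0]$ is found: the paper traces $\mu=\epsilon_{21}\epsilon_{17}^{-1}\epsilon_{18}^{-1}\epsilon_{23}^{-1}$ and $\lambda=\epsilon_{18}\epsilon_{19}$ through Figure \ref{fig:drilled_eesr}, uses that $\epsilon_{17},\epsilon_{23}$ are tree edges, and pushes through \eqref{eqn:simplification} to get $[\mu]=4b-a$ and $[\lambda]=2b-2a$ outright, whereas you recover $[\lambda_0]=2[\zeta_{21}]$ by a brief trace and then pin down $[s_0]$ only indirectly, by intersecting the kernel of $H_1(\mathring{M};\zz)\to H_0$ with the requirement that the quotient by $[s_0]$ be $\zz\oplus\zz/16$. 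That calibration is a nice observation and correctly forces $[s_0]=\pm4[\zeta_1]+m([\zeta_{21}]-[\zeta_1])$ with $m$ odd, but it is intrinsically sign-blind: $s_0$ and $s_0^{-1}$ give the same filled manifold, so no comparison of quotient groups can ever decide the sign of the $[\zeta_1]$-coordinate of $[s_0]$ relative to that of $[\lambda_0]$.

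That sign is exactly the content of the lemma, as you yourself note: with the opposite relative sign one gets $[s_2]=0\cdot[\zeta_1]+(m+4)([\zeta_{21}]-[\zeta_1])$ with $m+4$ odd, so the $\zz/4$ torsion is killed and $H_1(M_2;\zz)\cong\zz^2$, i.e.\ $H_2\cong\zz^2$ and $\mathring{\pi}_2(\zeta_1)$ would be a generator rather than $1$, contradicting both parts of the statement. Your proposal resolves this by asserting that the sign is ``pinned down (positive) by the orientation conventions on $\lambda_0$ and on the meridian $\mu$,'' but no argument is given, and the conventions of Definition \ref{def:vertical:surgery:curve} do not translate into homology coordinates without actually expressing $\mu$ as an edge-path in $\mathring{\B}$ and rewriting it in the generators $\zeta_1,\zeta_{10},\zeta_{21}$ --- which is precisely the step the paper performs and which you defer (your ``variant'' at the end has the same issue, since calibrating $w_0$ against $H_1(M_0;\zz)$ again only determines its class up to sign). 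So the proposal is sound in structure and in all the abelian-group bookkeeping, but the decisive verification --- the relative sign of the $[\zeta_1]$-components of $[\mu]$ and $[\lambda_0]$ --- is missing and cannot be supplied by the homological shortcut you propose; it requires the explicit trace through Figures \ref{fig:drilled_eesr} and \ref{fig:drilled_dual_graph} together with \eqref{eqn:simplification}.
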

\begin{proof}
From \eqref{eqn:drilled:presentation} it follows that $H_1(\mathring{M};\zz)$ is isomorphic to the cokernel of the matrix
\begin{equation}\label{eqn:homology:computation}
\begin{bmatrix}
-4 & 0 \\
0 & 0\\
4 & 0
\end{bmatrix}.\end{equation}
Since 
\begin{equation}\label{eqn:SNF}\begin{bmatrix}
4 & 0 \\
0 & 0 \\
0& 0
\end{bmatrix} = 
\begin{bmatrix}
0 & 0 & -1 \\
1 & 0 & 1\\
0 & 1 & 0
\end{bmatrix} \cdot \begin{bmatrix}
-4 & 0 \\
0 & 0\\
4 & 0
\end{bmatrix} \cdot \begin{bmatrix}
-1 & 0\\
0 & 1
\end{bmatrix}\end{equation}
is the Smith normal form of \eqref{eqn:homology:computation}, we get that \[H_1(\mathring{M};\zz) = \zz/4 \oplus \zz \oplus \zz = \langle a, b, c \ | \ 4a = 0\rangle.\] Note that here we use the additive convention for $H_1(\mathring{M};\zz)$, and that all generators $a, b, c$ commute which is not explicitly written as relations for brevity. The equality \eqref{eqn:SNF} implies that
\begin{equation}\label{eqn:homology:classes}
\lbrack\zeta_1\rbrack = b, \hspace{1cm}\lbrack \zeta_{10} \rbrack = c, \hspace{1cm}\lbrack\zeta_{21}\rbrack = -a + b\end{equation} in $H_1(\mathring{M};\zz)$. 
Let $s_k$ be a 1-cycle in $\mathring{\B}$ homotopic to the Dehn filling slope which creates $M_k$ out of~$\mathring{M}$. To find $H_k$ it suffices to find the homology class of $s_k$ expressed in terms of $a, b, c$, and then quotient $\langle a, b, c \ | \ 4a = 0\rangle$ by that expression.
One can consult Figure~\ref{fig:drilled_eesr} to verify that if
\begin{align*}
\mu &= \epsilon_{21}\epsilon_{17}^{-1}\epsilon_{18}^{-1}\epsilon_{23}^{-1},\\
\lambda &= \epsilon_{18}\epsilon_{19}
\end{align*}
then $s_k = \mu\lambda^k$. Using the simplification isomorphism \eqref{eqn:simplification} and the fact that edges $\epsilon_{17}$, $\epsilon_{23}$ are in the tree we deduce that the homology classes of $\mu$ and $\lambda$ are as follows
\begin{align*}
\lbrack \mu \rbrack &= \lbrack \zeta_{21} \rbrack - \lbrack\zeta_{18}\rbrack = 5\lbrack \zeta_{21} \rbrack- \lbrack \zeta_1 \rbrack  =  4b-a,\\
\lbrack \lambda \rbrack &= \lbrack\zeta_{18} \rbrack + \lbrack \zeta_{19} \rbrack =2\lbrack \zeta_{21} \rbrack  = 2b - 2a.
\end{align*}
Therefore \[\lbrack \mu\lambda^k \rbrack = -(2k+1)a + (2k+4)b = \begin{cases}
a + (2k+4)b &\text{if $k$ is odd}\\
-a+(2k+4)b &\text{if $k$ is even}.
\end{cases}\]
Consequently 
\begin{align*}H_1(M_k;\zz) =& \langle a, b, c \ | \ 4a = 0, \pm a+(2k+4)b =0\rangle= \\
= &\langle a, b, c \ | \ 4a=0, \pm a + (2k+4)b = 0, 4(2k+4)b =0 \rangle = \\
= &\langle a, b, c \ | \ \pm a + (2k+4)b = 0, 4(2k+4)b =0 \rangle =\\
= &\langle \pm a+ (2k+4)b, b, c \ | \ \pm a + (2k+4)b = 0, 4(2k+4)b =0 \rangle =\\
=& \langle b, c\ | \ (8k+16)b = 0 \rangle \cong \zz/(8k+16)\zz \oplus \zz.\end{align*}
Hence $H_k \cong \zz = \langle c \ | \ - \rangle$ which proves (1). Part (2) then follows from  \eqref{eqn:homology:classes}.\end{proof}

\subsection{The orientation homomorphism}\label{sec:orientation:homo}
In this section we find the orientation homomorphism $\mathring{\omega}: \pi_1(\mathring{M}) \rightarrow \lbrace -1, 1\rbrace$ of the blown-up branched surface $\mathring{\B}$.  First, we state its relationship to the orientation homomorphisms $\omega_k: \pi_1(M_k) \rightarrow \lbrace -1, 1\rbrace$ of the veering branched surfaces~$\B_k$.

\begin{lemma}\label{lem:omega}
	Let $\mathring{\omega}, \omega_k$ be the orientation homomorphism of $\mathring{\B}$, $\B_k$ respectively. Let $\psi_k: \pi_1(\mathring{M}) \rightarrow \pi_1(M_k)$ be induced by Dehn filling $M$ to $M_k$. Then the following diagram commutes. 
	\[	\begin{tikzcd}
	\pi_1(M) \arrow[d, "\psi_k",swap] \arrow[r, "\mathring{\omega}"] & \lbrace-1, 1 \rbrace \\
	\pi_1(M_k) \arrow[ru, "\omega_k", swap]& 
	\end{tikzcd}\]
\end{lemma}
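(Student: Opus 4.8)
The plan is to unwind the definition of the orientation homomorphism and observe that it is detected by a local condition on the branched surface, so that the agreement of $\mathring{\B}$ and $\B_k$ away from the surgery region forces the two homomorphisms to be compatible. Recall from Section~\ref{sec:computation:fox} (following \cite[Section~4]{parlak-alex}) that for a branched surface $\B$ the value $\omega(\gamma)$ of the orientation homomorphism on a loop $\gamma$ is $+1$ precisely when $\gamma$ lifts to a loop in the orientable double cover of $\B$; equivalently, $\omega$ is the composite $\pi_1(M)\twoheadrightarrow \pi_1(\B)\to \zz/2$ where the last map is the first Stiefel--Whitney class $w_1(\B)\in H^1(\B;\zz/2)$ of the branched surface evaluated on loops. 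Since both $\mathring{\B}$ and $\B_k$ are deformation retracts of their ambient manifolds $\mathring{M}$ and $M_k$, this means $\mathring{\omega}$ and $\omega_k$ are, up to the identifications $\pi_1(\mathring M)\cong\pi_1(\mathring\B)$ and $\pi_1(M_k)\cong\pi_1(\B_k)$, just the reductions of $w_1$ against the respective fundamental groups.

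First I would set up the comparison map at the level of branched surfaces. The $(1,k)$-vertical surgery that turns $\mathring{M}$ into $M_k$ fills in the tube $T_0$ and replaces the two ``extra'' sectors of $\mathring{\B}$ lying in $A$ and $A'$, together with the subdivided sectors produced by the blow-up, by the veering sectors of $\B_k$; crucially this all happens inside a neighborhood of $\lambda_0$, while outside that neighborhood $\mathring\B$ and $\B_k$ literally agree (this is exactly the point made in Section~\ref{sec:blowing-up} and Section~\ref{sec:fundamental:group}). Concretely, collapsing the tube and the newly-added cells gives a map of branched surfaces $\mathring\B\to\B_k$ covering (a map homotopic to) the inclusion-and-filling $\mathring M\hookrightarrow M_k\leftarrow$ filling, and inducing $\psi_k$ on $\pi_1$. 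The key point is then that $w_1(\B_k)$ pulls back to $w_1(\mathring\B)$: a local orientation of the sectors of $\B_k$ restricts to a local orientation of the sectors of $\mathring\B$ common to both, and the sectors of $\mathring\B$ created by the blow-up (the pieces of $\sigma_5$ and $\sigma_7$, the two annular sectors in $T_0$) are each simply connected, so orienting them coherently imposes no new constraint. Hence $\mathring\omega=\omega_k\circ\psi_k$, which is exactly the commutativity of the stated triangle.

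The main obstacle I anticipate is purely bookkeeping: making precise the map $\mathring\B\to\B_k$ and checking that the smoothening conventions (which sheet is one-sheeted, which two-sheeted, along the new edges $\epsilon_{16},\dots,\epsilon_{23}$ and the sutures $\lambda,\lambda'$) are consistent with the smoothening of $\B_k$, so that the orientability-double-cover of $\B_k$ genuinely restricts to that of $\mathring\B$. Since the sutures of $T_0$ have $A$ and $A'$ on their two-sheeted sides by construction, and the filling glues these up exactly as in the local model of Figure~\ref{fig:vertical_surgery}(b), the double covers match; but one does have to trace through Figure~\ref{fig:drilled_eesr} to see it. An alternative, perhaps cleaner, route would be to compute both $\mathring\omega$ and $\omega_k$ explicitly on generators — $\mathring\omega$ on the generators $\zeta_1,\zeta_{10},\zeta_{21}$ of presentation~\eqref{eqn:drilled:presentation} (reading off signs from the branch-locus orientations in Figure~\ref{fig:drilled_eesr}), $\omega_k$ on a corresponding generating set of $\pi_1(M_k)$ — and verify the identity $\mathring\omega=\omega_k\circ\psi_k$ on each generator using the simplification isomorphism~\eqref{eqn:simplification} together with the images $\psi_k(\zeta_i)$; this reduces the lemma to a finite check but is less conceptual. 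I would present the structural argument as the main proof and, if an explicit value of $\mathring\omega$ is needed downstream in Proposition~\ref{prop:taut:vanishes}, record it as a short addendum.
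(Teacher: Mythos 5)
There is a genuine gap in your main argument. The comparison map you want, ``collapsing the tube and the newly-added cells gives a map of branched surfaces $\mathring{\B}\to\B_k$,'' does not exist as described: collapsing the tube $T_0$ of $\mathring{\B}$ recovers $\B_0$, not $\B_k$, and for $k\geq 1$ the surgered branched surface $\B_k$ contains \emph{new} vertices and sectors (the $k n_1 n_2$ extra tetrahedra of $\V_k$ are dual to them) that are not images of cells of $\mathring{\B}$, so there is no collapse of $\mathring{\B}$ onto $\B_k$. Moreover, the justification that $w_1(\B_k)$ pulls back to $w_1(\mathring{\B})$ because the blow-up sectors are ``simply connected'' is both inaccurate (the two sectors in $T_0$ are annuli) and beside the point: non-orientability is detected by loops running along the branch locus, and the loops that traverse the modified region --- e.g.\ the image of $\zeta_{21}$, which in $\mathring{\B}$ is the core of a M\"obius band --- are precisely where the content of the lemma lies, and their behavior in $\B_k$ depends on the $k$-dependent local structure created by the surgery. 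Your fallback (check $\mathring{\omega}=\omega_k\circ\psi_k$ on the generators $\zeta_1,\zeta_{10},\zeta_{21}$) is sound in principle since $\psi_k$ is surjective, but as sketched it still requires evaluating $\omega_k$ on a cycle passing through the surgered part of $\B_k$ for \emph{every} $k$, which is not a single finite check and is exactly the analysis you have not supplied.

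The paper's proof is shorter and avoids this entirely by an algebraic observation you never make: $\ker\psi_k$ is normally generated by the filling slope $s_k=\mu\lambda^k$, and one verifies $\mathring{\omega}(\mu)=\mathring{\omega}(\lambda)=1$, hence $\ker\psi_k\leq\ker\mathring{\omega}$ and $\mathring{\omega}$ descends to $\pi_1(M_k)$; the descended homomorphism is then identified with $\omega_k$ by comparing values on $1$-cycles of $\B_k$, each of which is the $\psi_k$-image of a cycle in $\mathring{\Gamma}$ with a matching neighborhood. The condition $\mathring{\omega}(s_k)=1$ is the essential input (without it the triangle could not commute), and any repair of your double-cover argument would have to establish an equivalent statement --- that the orientation double cover extends over the filling region --- rather than appeal to the topology of individual sectors.
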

\begin{proof}
	The fundamental group $\pi_1(M_k)$ is generated by 1-cycles of $\B_k$. For any such cycle $c_k$ there is a cycle $c$ in the 1-skeleton $\mathring{\Gamma}$ of $\mathring{\B}$ such that $\psi_k(c) = c_k $. Since $\mathring{\omega}(\lambda) = \mathring{\omega}(\mu) = 1$ we have $\mathrm{ker} \ \psi_k \leq \mathrm{ker} \ \mathring{\omega}$ and thus  $\mathring{\omega}( c ) = \omega_k( c_k )$. 
\end{proof}

Recall from Section \ref{sec:fundamental:group} that a generator $\zeta_i$ of $\pi_1(\mathring{M})$ is the unique 1-cycle in $\mathring{Y} \cup \epsilon_i$. Using Figure \ref{fig:drilled_dual_graph} we find that 
\[\zeta_1 = \epsilon_1\epsilon_4^{-1}\epsilon_6^{-1}\epsilon_{11}\epsilon_{15}^{-1}\epsilon_{12}^{-1}\epsilon_8^{-1}\epsilon_0,\hspace{1cm} \zeta_{10} = \epsilon_{10}\epsilon_{11}\epsilon_{15}^{-1}, \hspace{1cm} \zeta_{21} = \epsilon_{21}\epsilon_{20} .\]


The first two 1-cycles are disjoint  from $T_0$ and thus can be seen as 1-cycles in  $\B_0$, or, in fact, as 1-cycles in $\B_k$ for any $k\geq 0$. To find their images under $\omega_k$ observe that 1-cycles decompose into turns, and turns in a veering branched surface can be classified into four types presented in Figure \ref{fig:turns}. We define a \emph{sign} of a turn as follows
\begin{gather*}
\mathrm{sgn}\left(\epsilon_{i_1}^{\delta_{i_1}}, \epsilon_{i_2}^{\delta_{i_2}}\right) = \begin{cases}
-1 & \text{ if} \left(\epsilon_{i_1}^{\delta_{i_1}}, \epsilon_{i_2}^{\delta_{i_2}}\right) \text{ is of type illustrated in Figure \ref{fig:turns} (a)}\\
+1 &\text{ otherwise.}
\end{cases}\end{gather*}

	\begin{figure}[h]
	\begin{center}
			\includegraphics[width=0.8\textwidth]{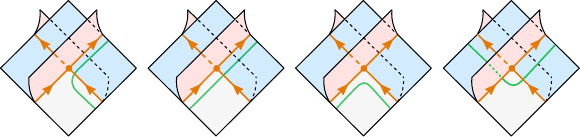} 
			\put(-312,-10){sgn = $-1$}
			\put(-295,-30){(a)}
			\put(-225,-10){sgn = $1$}
			\put(-212,-30){(b)}
			\put(-142,-10){sgn = $1$}
			\put(-127,-30){(c)}
				\put(-58,-10){sgn = $1$}
			\put(-46,-30){(d)}
	\end{center}
	\caption{Turns in a veering branched surface near a bottom vertex of a blue sector. Reflecting the figure across the plane of the sheet (and switching the colors of sectors) yields a picture of turns in a veering branched surface near a bottom vertex of a red sector.}
	\label{fig:turns}
\end{figure}

\begin{lemma}\label{lem:omega:turns}
	Suppose that \[\zeta = \epsilon_{i_1}^{\delta_{i_1}} \cdots \epsilon_{i_m}^{\delta_{i_m}}\]
	is a 1-cycle in a veering branched surface $\B$. If $\omega$ is the orientation homomorphism determined by $\B$ then
	\[\omega(\zeta) = \prod_{j=1}^m \mathrm{sgn}\left(\epsilon_{i_j}^{\delta_{i_j}}, \epsilon_{i_{j+1}}^{\delta_{i_{j+1}}}\right),\]
	where the subscript $j$ is taken modulo $m$.
	\end{lemma}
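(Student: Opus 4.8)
The plan is to reduce the computation of $\omega(\zeta)$ to a local count of signs of turns by unwinding the definition of the orientation homomorphism. Recall that $\omega(\zeta) = 1$ if and only if $\zeta$ lifts to a loop in the orientable double cover $\widehat{\B} \to \B$ of the stable branched surface. The double cover is determined by the first Stiefel--Whitney class of the tangent plane bundle of the (smoothed) branched surface, so $\omega$ is the corresponding $\zz/2$-valued cocycle evaluated on $\zeta$. Since $\zeta$ is written as a concatenation of oriented edges $\epsilon_{i_1}^{\delta_{i_1}}, \ldots, \epsilon_{i_m}^{\delta_{i_m}}$, traversing $\zeta$ amounts to passing through a cyclic sequence of turns $\left(\epsilon_{i_j}^{\delta_{i_j}}, \epsilon_{i_{j+1}}^{\delta_{i_{j+1}}}\right)$, and the value of $\omega$ on $\zeta$ is the product of the local contributions of these turns to the monodromy of the orientation cover.

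First I would fix, once and for all, a consistent local orientation of the tangent planes of $\B$ along each edge of the branch locus --- this is possible because a neighborhood of an edge in a branched surface is orientable. Then, as one passes from the edge $\epsilon_{i_j}^{\delta_{i_j}}$ to the next edge $\epsilon_{i_{j+1}}^{\delta_{i_{j+1}}}$ through the common vertex, one compares the chosen local orientation on the incoming side with the chosen local orientation on the outgoing side; the turn contributes $-1$ to the monodromy precisely when these disagree. The heart of the argument is then the purely local claim: this disagreement happens exactly for turns of the type in Figure \ref{fig:turns}(a), and not for the types in (b), (c), (d). This is checked by inspecting the four local models of a vertex of a veering branched surface (Figure \ref{fig:veering_branched_surface}), together with the enumeration of turn types near a bottom vertex in Figure \ref{fig:turns}; the three ``non-crossing'' turn types preserve the local orientation while the single ``crossing'' type reverses it. Multiplying these local contributions around the cycle $\zeta$ and observing that the chosen local orientations cancel in pairs (the outgoing choice at vertex $j$ is compared against the incoming choice at the same location) yields
\[
\omega(\zeta) = \prod_{j=1}^m \mathrm{sgn}\left(\epsilon_{i_j}^{\delta_{i_j}}, \epsilon_{i_{j+1}}^{\delta_{i_{j+1}}}\right),
\]
with subscripts mod $m$, as claimed.

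The main obstacle I expect is making the local orientation-comparison argument precise: one must be careful that the contribution assigned to each turn is genuinely independent of the (arbitrary) choices of local orientations along the edges, i.e. that the only honest invariant is the product over the whole cycle. The clean way to handle this is to phrase it as a $1$-cochain/$1$-cocycle computation --- assign to each oriented edge the chosen local orientation as a ``gauge'', assign to each turn the $\pm 1$ discrepancy, and note that changing a gauge on an edge flips the discrepancy at both of its endpoints, hence does not change the product around any cycle. Once this bookkeeping is set up, the remaining content is the finite case-check against Figures \ref{fig:veering_branched_surface} and \ref{fig:turns}, which identifies the reversing turns with type (a). The reflection symmetry noted in the caption of Figure \ref{fig:turns} (red versus blue sectors) shows the sign convention is consistent regardless of the color of the sector at which the turn is based, so no separate treatment of red sectors is needed.
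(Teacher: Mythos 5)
Your argument is correct and takes essentially the same route as the paper: both compute $\omega(\zeta)$ as the monodromy of a local orientation of the tangent planes of $\B$ transported along $\zeta$, with the orientation reversals located, by inspection of the local models in Figures \ref{fig:veering_branched_surface} and \ref{fig:turns}, exactly at turns of type (a). The only difference is that the paper avoids your gauge-invariance bookkeeping by fixing a canonical local orientation along each traversed edge --- the pair (direction in which $\zeta$ passes through the edge, vector pointing toward its one-sheeted side) --- so that each turn's contribution is well defined from the outset.
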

\begin{proof}
	The proof is completely analogous to \cite[Lemma 5.6]{LMT}; the only difference here is that we allow cycles to traverse edges in the direction opposite to their orientation. 
	For every point $x$ in the interior of $\epsilon_{i_j}$ we set a local orientation at $x$ to be determined by a pair (direction in which $\zeta$ passes through $\epsilon_{i_j}$, vector which points towards the one-sheeted side of $\epsilon_{i_j}$). This orientation flips only at a vertex of a turn of type illustrated in Figure \ref{fig:turns} (a). Since these are the only turns with a negative sign, it follows that the image of $\zeta$ under $\omega$ is equal to the product of signs of its turns.
\end{proof}
Now we will use Lemmas \ref{lem:omega} and \ref{lem:omega:turns}  to find the images of $\zeta_1, \zeta_{10}, \zeta_{21}$ under the orientation homomorphism of the blown-up (non-veering) branched surface $\mathring{\B}$. 
\begin{lemma}\label{lem:omega:final}
	Let $\mathring{\omega}: \pi_1(\mathring{M})\rightarrow \lbrace -1, 1\rbrace$ be the orientation homomorphism of $\mathring{\B}$.
	Consider the presentation \eqref{eqn:drilled:presentation} for $\pi_1(\mathring{M})$. The generators $\zeta_1, \zeta_{10}, \zeta_{21}$ of $\pi_1(\mathring{M})$ satisfy
	\[\mathring{\omega}(\zeta_1) = \mathring{\omega}(\zeta_{10}) = \mathring{\omega}(\zeta_{21}) = -1.\]
\end{lemma}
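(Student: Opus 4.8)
The plan is to compute $\mathring{\omega}$ on each of the three generators separately, splitting according to whether the generating cycle meets the tube $T_0$. The cycles $\zeta_1$ and $\zeta_{10}$ are disjoint from $T_0$, so (as observed in Section~\ref{sec:orientation:homo}) they are honest $1$-cycles of $\B_0$, and Lemma~\ref{lem:omega}, applied with $k=0$, gives $\mathring{\omega}(\zeta_1)=\omega_0(\zeta_1)$ and $\mathring{\omega}(\zeta_{10})=\omega_0(\zeta_{10})$, where $\omega_0$ is the orientation homomorphism of the \emph{veering} branched surface $\B_0$. For these two I would then invoke Lemma~\ref{lem:omega:turns}: writing $\zeta_1=\epsilon_1\epsilon_4^{-1}\epsilon_6^{-1}\epsilon_{11}\epsilon_{15}^{-1}\epsilon_{12}^{-1}\epsilon_8^{-1}\epsilon_0$ and $\zeta_{10}=\epsilon_{10}\epsilon_{11}\epsilon_{15}^{-1}$, one reads off from the combinatorics of $\V_0$ (Figures~\ref{fig:eesr_tetrahedra}, \ref{fig:eesr_dual_graph}, \ref{fig:eesr_sectors_graphs}), for each of the eight turns of $\zeta_1$ and the three turns of $\zeta_{10}$, whether it is of the sign $-1$ type of Figure~\ref{fig:turns}(a), and then multiplies the signs. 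I expect an odd number of $(-1)$ turns in each case, which yields $\mathring{\omega}(\zeta_1)=\mathring{\omega}(\zeta_{10})=-1$.

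The generator $\zeta_{21}=\epsilon_{21}\epsilon_{20}$ does run through $T_0$, so it cannot be pushed into $\B_0$, and since $\mathring{\B}$ is not veering (see the Warning in Section~\ref{sec:blowing-up}), Lemma~\ref{lem:omega:turns} does not apply verbatim. Instead I would rerun the proof of that lemma directly on $\mathring{\B}$: equip $\zeta_{21}$ with the local orientation that at an interior point $x$ of an edge is the pair (direction in which $\zeta_{21}$ traverses the edge, vector pointing to the one-sheeted side of that edge), and track whether this orientation is preserved or reversed across each of the two vertices of this $2$-cycle. One of these vertices has a local picture of $\mathring{\B}$ as in Figure~\ref{fig:veering_branched_surface}, so it is governed by Figure~\ref{fig:turns} exactly as in the veering case; the other is the "bad" vertex common to $\epsilon_{17},\epsilon_{20},\epsilon_{21},\epsilon_{22}$, where I would use the smoothing convention from Section~\ref{sec:blowing-up} (the annuli $A,A'$ lie on the two-sheeted sides of the suture edges) together with the right-hand picture of Figure~\ref{fig:drilled_eesr} to pin down the one-sheeted sides of $\epsilon_{20}$ and $\epsilon_{21}$ and hence the behaviour of the local orientation there. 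Tallying the flips should give an odd total, whence $\mathring{\omega}(\zeta_{21})=-1$.

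The main obstacle is precisely this last local analysis: it takes place at a vertex where none of the structural lemmas (Lemma~\ref{lem:direction:well:defined}, Lemma~\ref{lem:colors:on:the:1:sheeted:side}, Lemma~\ref{lem:omega:turns}) apply, so the argument has to be made by hand from the construction of $\mathring{\B}$. By contrast, the computations for $\zeta_1$ and $\zeta_{10}$ are routine, if somewhat lengthy, turn-by-turn bookkeeping. It is worth noting that the relations of the presentation \eqref{eqn:drilled:presentation} give no shortcut here: every relator has even total exponent in each of $\zeta_1$, $\zeta_{10}$, $\zeta_{21}$, so the relations impose no constraint on the homomorphism $\mathring{\omega}$, and all three values genuinely must be computed individually.
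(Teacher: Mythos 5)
Your plan for $\zeta_1$ and $\zeta_{10}$ is exactly the paper's: push them into $\B_0$ (via Lemma \ref{lem:omega} / the fact that they miss $T_0$) and apply Lemma \ref{lem:omega:turns}. But you stop at ``I expect an odd number of $(-1)$ turns,'' which is precisely the content that has to be checked: the lemma is equivalent to those parities, so asserting the expected answer is not a proof. The paper makes this check short by first observing (from Figure \ref{fig:turns} and Lemma \ref{lem:direction:well:defined}(2)) that the negative turns are exactly those of the form $(\epsilon,\epsilon')$ or $(\epsilon'^{-1},\epsilon^{-1})$ with $\epsilon'$ an uppermost edge of some sector, and then reading off from Figure \ref{fig:eesr_sectors_graphs} that $\zeta_1$ contains the single negative turn $(\epsilon_{15}^{-1},\epsilon_{12}^{-1})$ and $\zeta_{10}$ the single negative turn $(\epsilon_{10},\epsilon_{11})$. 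Without some such explicit count your argument for these two generators is incomplete.

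The more serious gap is $\zeta_{21}$. You correctly note that Lemma \ref{lem:omega:turns} does not apply at the non-veering vertex and propose to redo the local-orientation bookkeeping by hand there, but you never carry it out; ``tallying the flips should give an odd total'' is again the statement to be proved, and at the bad vertex none of the structural lemmas pin down the one-sheeted sides for you, so this is exactly the step that cannot be waved through. The paper avoids this entirely with a one-line observation you missed: $\zeta_{21}=\epsilon_{21}\epsilon_{20}$ is homotopic to the core of the M\"obius band embedded in $\mathring{\B}$ with boundary $\epsilon_{21}\epsilon_{22}$ --- this is the very M\"obius band (with core $\gamma\subset\sigma_7$) whose boundary was taken as the surgery curve $\lambda_0$ in Section \ref{sec:initial} --- and the core of an embedded M\"obius band never lifts to the orientable double cover, so $\mathring{\omega}(\zeta_{21})=-1$ immediately. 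Your closing remark that the relators have even exponent sums, hence give no constraint, is correct and reinforces that all three values must come from geometry; but in the proposal as written none of the three is actually established.
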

\begin{proof}
The embedding of the 1-cycle $\zeta_{21} = \epsilon_{21}\epsilon_{20}$ into $\mathring{\B}$  is visible in  Figure \ref{fig:drilled_eesr}. It is homotopic to the core of a M\"obius band with boundary $\epsilon_{21}\epsilon_{22}$ embedded in $\mathring{\B}$. Thus clearly $\mathring{\omega}(\zeta_{21}) = -1$.

The image of a 1-cycle $\zeta$ in $\mathring{\B}$ under the orientation homomorphism of $\mathring{\B}$ depends entirely on a small neighborhood of the embedding of $\zeta$ into $\mathring{\B}$. The cycles $\zeta_1 = \epsilon_1\epsilon_4^{-1}\epsilon_6^{-1}\epsilon_{11}\epsilon_{15}^{-1}\epsilon_{12}^{-1}\epsilon_8^{-1}\epsilon_0$, $\zeta_{10} = \epsilon_{10}\epsilon_{11}\epsilon_{15}^{-1}$ are disjoint from the tube $T_0 \subset \mathring{\B}$, and thus they can be realized as 1-cycles in $\B_k$ for any $k\geq 0$. Let $\zeta_i^{(k)}$ be the 1-cycle in $\B_k$ corresponding to $\zeta_i$, for $i=1, 10$. Since $\zeta_i$ and $\zeta_i^{(k)}$ admit homeomorphic neighborhoods in the respective branched surfaces in which they are embedded, we have
\[\mathring{\omega}(\zeta_i)= \omega_k\left(\zeta_i^{(k)}\right). \]
Thus it suffices to find~$\omega_0\left(\zeta_i^{(0)}\right)$. Since~$\B_0$ is veering, we can use Lemma \ref{lem:omega:turns}.
According to Figure \ref{fig:turns} and Lemma \ref{lem:direction:well:defined} (2), the only turns with a negative sign are of the form $(\epsilon, \epsilon')$ or $(\epsilon'^{-1}, \epsilon^{-1})$, where $\epsilon'$ is an uppermost edge of some sector. The list of all such turns in $\B_0$ can be found using Figure \ref{fig:eesr_sectors_graphs}. From this we deduce that $\left(\epsilon_{15}^{-1}, \epsilon_{12}^{-1} \right)$ is the only turn of a negative sign in $\zeta_1$, and that $\left(\epsilon_{10}, \epsilon_{11}\right)$ is the only turn of a negative sign in $\zeta_{10}$. Hence $\mathring{\omega}(\zeta_1) = \mathring{\omega}(\zeta_{10}) = -1$.
\end{proof}

\subsection{The taut polynomials of surgered triangulations}
Now we are ready to prove that the taut polynomial of $\V_k$ vanishes.
\begin{proposition}\label{prop:taut:vanishes}
For every integer $k\geq 0$ the taut polynomial $\Theta_k$ of the veering triangulation~$\V_k$ vanishes.
\end{proposition}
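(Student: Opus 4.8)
The plan is to combine Theorem~\ref{thm:taut:is:twisted} with the constructions of Sections~\ref{sec:blowing-up}--\ref{sec:orientation:homo}, along the lines sketched in Section~\ref{sec:strategy}. By Theorem~\ref{thm:taut:is:twisted} we have $\Theta_k = \Delta_{M_k}^{\omega_k\otimes\pi_k}$, so it is enough to produce a presentation of $\pi_1(M_k)$ whose twisted Alexander matrix has a trivial ideal of maximal minors. I would start from the presentation~\eqref{eqn:drilled:presentation} of $\pi_1(\mathring M)$, which has the three generators $\zeta_1,\zeta_{10},\zeta_{21}$ and the two relations $r_1,r_2$. Since $M_k$ is the Dehn filling of $\mathring M$ along the slope $s_k=\mu\lambda^k$, expressed in terms of these generators via the simplification isomorphism~\eqref{eqn:simplification}, adjoining the word representing $s_k$ yields a presentation of $\pi_1(M_k)$ with three generators and three relations. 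Hence its Jacobian $J_k$ is a $3\times 3$ matrix obtained from the $2\times 3$ Jacobian $\mathring J$ of~\eqref{eqn:drilled:presentation} (recorded in Appendix~\ref{sec:AppendixA}) by appending one row, and $\Delta_{M_k}^{\omega_k\otimes\pi_k}$ is the greatest common divisor of the $2\times 2$ minors of $J_k^{\omega_k\otimes\pi_k}$.

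Next I would identify the top two rows of $J_k^{\omega_k\otimes\pi_k}$ with $\mathring J^{\mathring\omega\otimes\mathring\pi_k}$. The Fox derivatives $\partial r_i/\partial\zeta_j$ are elements of $\zz[F]$, with $F$ the free group on $\zeta_1,\zeta_{10},\zeta_{21}$, and passing them to $\zz[\pm H_k]$ through $\omega_k\otimes\pi_k$ factors through $\psi_k\colon\pi_1(\mathring M)\to\pi_1(M_k)$; by Lemma~\ref{lem:omega} and~\eqref{eqn:projection:to:Hk:def} one has $(\omega_k\otimes\pi_k)\circ\psi_k=(\omega_k\circ\psi_k)\otimes(\pi_k\circ\psi_k)=\mathring\omega\otimes\mathring\pi_k$, so the first two rows of $J_k^{\omega_k\otimes\pi_k}$ are precisely the entries of $\mathring J$ pushed through $\mathring\omega\otimes\mathring\pi_k$. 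Moreover, by Lemma~\ref{lem:projection:to:H_k} the map $\mathring\pi_k$ is independent of $k$, and $\mathring\omega$ is visibly independent of $k$, so $\mathring J^{\mathring\omega\otimes\mathring\pi_k}$ is one fixed matrix, and it suffices to treat it once for all $k$.

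The heart of the argument is the verification that this fixed matrix is the zero matrix. By Lemma~\ref{lem:projection:to:H_k}(2) we have $H_k=\langle c\rangle\cong\zz$ with $\mathring\pi_k(\zeta_1)=1$, $\mathring\pi_k(\zeta_{10})=c$, $\mathring\pi_k(\zeta_{21})=1$, and by Lemma~\ref{lem:omega:final} we have $\mathring\omega(\zeta_1)=\mathring\omega(\zeta_{10})=\mathring\omega(\zeta_{21})=-1$; hence $\mathring\omega\otimes\mathring\pi_k$ sends $\zeta_1\mapsto -1$, $\zeta_{10}\mapsto -c$, $\zeta_{21}\mapsto -1$. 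Substituting these values into the six Fox derivatives from Appendix~\ref{sec:AppendixA} and simplifying in $\zz[c^{\pm1}]$ shows that every entry of $\mathring J^{\mathring\omega\otimes\mathring\pi_k}$ is $0$. I expect this substitution-and-cancellation step to be the only genuine obstacle: it is a mechanical check (carried out by hand, or with \texttt{alexander\_matrix()} in SageMath), but the words $r_1,r_2$ are long, so their Fox derivatives are unwieldy and the cancellations must be tracked with care. Granting it, $J_k^{\omega_k\otimes\pi_k}$ has its first two rows equal to zero, so every one of its $2\times 2$ minors contains a zero row and vanishes; therefore $\Delta_{M_k}^{\omega_k\otimes\pi_k}=0$, and Theorem~\ref{thm:taut:is:twisted} gives $\Theta_k=0$ for every $k\ge 0$.
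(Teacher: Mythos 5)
Your proposal is correct and follows essentially the same route as the paper: adjoin the filling relation $s_k=\mu\lambda^k$ to the presentation \eqref{eqn:drilled:presentation}, note via Lemma \ref{lem:omega} and \eqref{eqn:projection:to:Hk:def} that $(\omega_k\otimes\pi_k)\circ\psi_k=\mathring{\omega}\otimes\mathring{\pi}_k$, and observe that $\mathring{J}^{\mathring{\omega}\otimes\mathring{\pi}_k}$ is the zero $2\times3$ matrix (the substitution $\zeta_1\mapsto-1$, $\zeta_{10}\mapsto-c$, $\zeta_{21}\mapsto-1$ you flag as the mechanical step is exactly the computation recorded in Appendix \ref{sec:AppendixA}), so every $2\times2$ minor of $J_k^{\omega_k\otimes\pi_k}$ vanishes. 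No gaps.
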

\begin{proof}
	The proof was already outlined in Section \ref{sec:strategy}. Here we just fill in the details using the notation introduced in Sections \ref{sec:blowing-up} -- \ref{sec:orientation:homo} and previous lemmas.
	
Let $\langle X \ | \ R \rangle$ be the presentation \eqref{eqn:drilled:presentation} of $\pi_1(\mathring{M})$. A presentation $\langle X \ | \ R_k \rangle$ of $\pi_1(M_k)$ is obtained by adding one relation $s_k = \mu\lambda^k = \epsilon_{21}\epsilon_{17}^{-1}\epsilon_{18}^{-1}\epsilon_{23}^{-1}(\epsilon_{18}\epsilon_{19})^k$ to $R$.
Since the Fox derivative of a word $u$ relative to a generator $\zeta_i$ does not depend on the relations in the group generated by~$X$ (see Section \ref{sec:computation:fox}), the Jacobian $J_k$ of $\langle X \ | \ R_k \rangle$ is obtained from the Jacobian $\mathring{J}$ of $\langle X \ | \ R \rangle$ by adding one row $\frac{\partial s_k}{\partial X}$. 

By Theorem \ref{thm:taut:is:twisted} 
\[\Theta_k = \Delta_{M_k}^{\omega_k \otimes \pi_k}.\]
Since $|X| = 3$, it follows that $\Theta_k$ is the greatest common divisor of the $2\times 2$ minors of the matrix $J_k^{\omega_k\otimes \pi_k}$ obtained from $J_k$ by mapping its entries through \[\omega_k\otimes \pi_k: \zz\lbrack \pi_1(M_k)\rbrack  \rightarrow \zz \lbrack H_k \rbrack.\] By Lemma \ref{lem:omega} and the definition \eqref{eqn:projection:to:Hk:def} of $\mathring{\pi}_k$, for every word $u$ in $X$ we have
\[(\mathring{\omega}\otimes\mathring{\pi}_k)(u) = (\omega_k\otimes \pi_k)(u).\]
Consequently, $J_k^{\omega_k\otimes \pi_k}$ is obtained from $\mathring{J}^{\mathring{\omega}\otimes \mathring{\pi}_k}$ by adding one row. The entries of the Jacobian $\mathring{J}$, and their images under $\mathring{\omega}\otimes \mathring{\pi}_k$, are listed in Appendix \ref{sec:AppendixA}. We have
\[\mathring{J}^{\mathring{\omega}\otimes \mathring{\pi}_k} = \begin{bmatrix}
0 & 0 & 0\\
0 & 0 & 0
\end{bmatrix}.\]
Therefore regardless of $k$ the greatest common divisor of the $2\times 2$ minors of the matrix~$J_k^{\omega_k\otimes \pi_k}$ is equal to 0.
\end{proof}
We have completed the proof of the Main Theorem.
\begin{thm:main}\label{thm:main}
	For any integer $k\geq0$ there is a veering triangulation $\V_k$ with $8+k$ tetrahedra whose taut polynomial vanishes.\qed
\end{thm:main}

\begin{remark}\label{rem:other:sequence}
In Section \ref{sec:initial} we observed that there are two vertical surgery curves in $\B_0$ that can potentially be used to construct infinitely many veering triangulations with a vanishing taut polynomial. Indeed, the other choice results in a different sequence $\left(\V_k'\right)_{k\geq0}$ of veering triangulations with $8+k$ tetrahedra whose taut polynomials vanish. The first triangulation $\V_1'$  has the taut signature \texttt{jLLAMLQacdefgihiijkaqqhuuwj\_200210022} which is different than the taut signature of $\V_1$ from the Main Thereorem; see Table \ref{tab:signatures}. Thus for $k \geq 0$ a veering triangulation with $8+k$ tetrahedra and a vanishing taut polynomial  is not unique.
	\end{remark}
\begin{remark}\label{rem:double:covers}
It follows from Lemmas \ref{lem:omega} and \ref{lem:omega:final} that for every $k \geq 0$ the stable branched surface of $\V_k$ is not orientable. The same is true for the veering triangulations~$\left(\V_k'\right)_{k\geq0}$ from Remark \ref{rem:other:sequence}. However,  nonorientability of the stable branched surface is not a necessary condition on a veering triangulation for its taut polynomial to vanish. Using the same methods as we used in Proposition~\ref{prop:taut:vanishes} one can show that   the lift $\widehat{\V}_k$ of~$\V_k$ to the double cover of $M_k$ determined by $\omega_k$ has a vanishing taut polynomial. This construction yields an infinite sequence of veering triangulations with the number of tetrahedra tending to infinity, orientable stable branched surfaces, and a vanishing taut polynomial. By Theorem \ref{thm:taut:is:twisted}, the Alexander polynomials of 3-manifolds underlying $(\widehat{\V}_k)_{k \geq 0}$ also vanish. 
\end{remark}
\subsection{The flows encoded by $(\V_k)_{k \geq 0}$} \label{sec:the:flows}
A veering triangulation $\V$ of $M$  equips the boundary components of $M$ with a train track which decomposes into finitely many \emph{ladders} separated by \emph{ladderpole curves} \cite[Section 1.2]{Gueritaud_CT}.  Suppose that~$M$ has one boundary component $T$. Let $l$ be the union of all ladderpole curves in $T$, $s$ be a slope on~$T$, and $p$ be  half of the geometric intersection number between $l$ and~$s$. By a result of Agol-Tsang, if $p\geq 2$ then~$\V$ determines a transitive pseudo-Anosov flow $\Psi(s)$ on the 3-manifold $M(s)$ obtained from $M$ by Dehn filling it along~$s$  \cite[Theorem 5.1]{Tsang-Agol}. The core of the filling solid torus is a $p$-pronged orbit of $\Psi(s)$. Therefore the same veering triangulation encodes infinitely many pseudo-Anosov flows: one for almost every Dehn filling slope. Of particular interest are slopes of boundary components of surfaces carried by $\V$ (if they exist). The manifold filled along such a slope admits an embedded surface almost transverse to $\Psi(s)$ \cite[Theorem~5.1]{LMT_flow}.

For every $k \geq0$ the veering triangulation $\V_k$ from the Main Theorem determines two ladders on the unique boundary torus $T_k$ of $M_k$; a picture can be found in the Veering Census \cite{VeeringCensus}. Surfaces carried by $\V_k$ are all twice-punctured tori. 
Let $S_k$ be a twice-punctured torus carried by $\V_k$. The geometric intersection number between a  connected component  of $T_k \cap S_k$ and the union of ladderpole curves in $T_k$ is equal to four. Thus $\V_k$ determines an Anosov flow  on~$M_k(s_k)$.  This flow is transverse to a torus but non-circular, i.e. the torus is not a fiber of a fibration over the circle.
Using Regina \cite{regina} we have found that~$M_k(s_k)$ is a closed 3-manifold obtained from the Seifert fibered space over an annulus with two exceptional fibers, both  with parameters $(2,1)$, by identifying its two boundary components; the identification homeomorphism is different for different~$k$. 


Not all veering triangulations with a vanishing taut polynomial have only two ladders. For instance, veering triangulations from Remark \ref{rem:double:covers} have four ladders. They also determine Anosov flows transverse to tori. Furthermore, there are  veering triangulations with more than four ladders and a vanishing taut polynomial. Possible examples are veering triangulations with taut signatures 
\begin{center} \texttt{oLLLvPLQAQccefjimijmllnnlniitcvikvhlljrcj\_10221211112122}, \\
	 \texttt{pLLLLAzMMMQbceighjkljlmonootsajsgalqlbgalbk\_201022222110102}.
	\end{center}
The  first one lives on a 3-manifold with two boundary components; it determines eight ladders in one boundary component and four in the other. The second one determines six ladders in the unique boundary component of its underlying manifold. All  statements from the last two paragraphs can be verified using tools available in~\cite{VeeringGitHub}. 

\subsection{Further questions}
	The Main Theorem motivates the following questions.

\begin{question}
	Let $\Theta$ be the taut polynomial of a veering triangulation $\V$. 
	\begin{enumerate}
		\item Are there infinitely many veering triangulations with the taut polynomial equal to $\Theta$? 
		\item Can an arbitrarily large veering triangulation have the taut polynomial equal to $\Theta$?
	\end{enumerate}
\end{question}
A positive answer to these questions  for $\Theta  \neq 0$ would provide further evidence that the knowledge of all possible twisted Alexander polynomials $\Delta_M^{\omega \otimes \pi}$, for all homomorphisms $\omega: \pi_1(M)\rightarrow \lbrace - 1, 1 \rbrace$, is not sufficient to bound the number of tetrahedra of a veering triangulation of $M$. 

Furthermore, it would be interesting to know whether pseudo-Anosov flows encoded by veering triangulations with the same taut polynomial share any important dynamical properties.

\newpage

	\bibliographystyle{abbrv}
\bibliography{mybib}
\appendix
\newpage 
\section{The Jacobian of \eqref{eqn:drilled:presentation}}\label{sec:AppendixA}
\[X = \lbrace \zeta_1, \zeta_{10}, \zeta_{21} \rbrace\]
\begin{gather*}
r_1 = \zeta_{21}^{-2}\zeta_1^{-1}\left(\zeta_{10}^{-1}\zeta_{21}^2\zeta_{10}\zeta_1^{-1}\zeta_{10}^{-1}\zeta_{21}^2\right)^2\zeta_{10}\zeta_1^{-1}\zeta_{21}^{-2}\zeta_{10}, \\ r_2 = \zeta_{21}^{-2}\left(\zeta_{10}\zeta_1^{-1}\zeta_{10}^{-1}\zeta_{21}^2\right)^2\zeta_1\zeta_{21}^2\zeta_1^{-1}\left(\zeta_{21}^{-2}\zeta_{10}\zeta_1\zeta_{10}^{-1}\right)^2 
\end{gather*}

The entries $\frac{\partial r_j}{\partial\zeta_j}$ of the Jacobian of $\pi_1(\mathring{M}) = \langle X \ | \ r_1, r_2 \rangle$ are listed below. For clarity, each term is written in a separate line. On the right side of the vertical line we compute the images of terms under $\mathring{\omega} \otimes \mathring{\pi}_k:\zz \lbrack \pi_1(\mathring{M}) \rbrack \rightarrow \zz\lbrack H_k \rbrack$ using Lemmas \ref{lem:projection:to:H_k} and \ref{lem:omega:final}.

\renewcommand{\arraystretch}{1.5}
\[
\begin{array}{ll|l}\frac{\partial r_1}{\zeta_1} = &-\zeta_{21}^{-2} \zeta_{1}^{-1} &+1\\
&- \zeta_{21}^{-2} \zeta_{1}^{-1} \zeta_{10}^{-1} \zeta_{21}^{2} \zeta_{10} \zeta_{1}^{-1} &-1\\
&- \zeta_{21}^{-2} \zeta_{1}^{-1} \zeta_{10}^{-1} \zeta_{21}^{2} \zeta_{10} \zeta_{1}^{-1} (\zeta_{10}^{-1}\zeta_{21}^{2})^{2} \zeta_{10} \zeta_{1}^{-1} &-c^{-1} \\ 
&-\zeta_{21}^{-2} \zeta_{1}^{-1} (\zeta_{10}^{-1} \zeta_{21}^{2} \zeta_{10} \zeta_{1}^{-1} \zeta_{10}^{-1} \zeta_{21}^{2})^{2} \zeta_{10} \zeta_{1}^{-1}&+c^{-1}\\
& & \\[-0.5em]
\frac{\partial r_1}{\zeta_{10}} =& -\zeta_{21}^{-2} \zeta_1^{-1} \zeta_{10}^{-1} &-c^{-1}\\
&+ \zeta_{21}^{-2} \zeta_1^{-1} \zeta_{10}^{-1} \zeta_{21}^{2} &-c^{-1}\\
&- \zeta_{21}^{-2} \zeta_1^{-1} \zeta_{10}^{-1} \zeta_{21}^{2} \zeta_{10} \zeta_1^{-1} \zeta_{10}^{-1} &+c^{-1} \\
&-\zeta_{21}^{-2} \zeta_1^{-1} \zeta_{10}^{-1} \zeta_{21}^{2} \zeta_{10} \zeta_1^{-1} \zeta_{10}^{-1} \zeta_{21}^{2} \zeta_{10}^{-1} &-c^{-2}\\
&+ \zeta_{21}^{-2} \zeta_1^{-1} \zeta_{10}^{-1} \zeta_{21}^{2} \zeta_{10} \zeta_1^{-1} (\zeta_{10}^{-1} \zeta_{21}^{2})^{2} &-c^{-2}\\
&-\zeta_{21}^{-2} \zeta_1^{-1} \zeta_{10}^{-1} \zeta_{21}^{2} \zeta_{10} \zeta_1^{-1} (\zeta_{10}^{-1} \zeta_{21}^{2})^{2} \zeta_{10} \zeta_1^{-1} \zeta_{10}^{-1} &+c^{-2}\\
&+ 
\zeta_{21}^{-2} \zeta_1^{-1} (\zeta_{10}^{-1} \zeta_{21}^{2} \zeta_{10} \zeta_1^{-1} \zeta_{10}^{-1} \zeta_{21}^{2})^{2} &+c^{-2} \\
&+\zeta_{21}^{-2} \zeta_1^{-1} (\zeta_{10}^{-1} \zeta_{21}^{2} \zeta_{10} \zeta_1^{-1} \zeta_{10}^{-1} \zeta_{21}^{2})^{2} \zeta_{10} \zeta_1^{-1} \zeta_{21}^{-2}&+c^{-1}\\
& & \\[-0.5em]
\frac{\partial r_1}{\zeta_{21}} = &-\zeta_{21}^{-1} &+1\\
&- \zeta_{21}^{-2} &-1\\
&+ \zeta_{21}^{-2} \zeta_1^{-1} \zeta_{10}^{-1}&-c^{-1} \\
&+ \zeta_{21}^{-2} \zeta_1^{-1} \zeta_{10}^{-1} \zeta_{21} &+c^{-1}\\
&+ \zeta_{21}^{-2} \zeta_1^{-1} \zeta_{10}^{-1} \zeta_{21}^{2} \zeta_{10} \zeta_1^{-1} \zeta_{10}^{-1} &+c^{-1}\\
&+ \zeta_{21}^{-2} \zeta_1^{-1} \zeta_{10}^{-1} \zeta_{21}^{2} \zeta_{10} \zeta_1^{-1} \zeta_{10}^{-1} \zeta_{21} &-c^{-1}\\
&+\zeta_{21}^{-2} \zeta_1^{-1} \zeta_{10}^{-1} \zeta_{21}^{2} \zeta_{10} \zeta_1^{-1} \zeta_{10}^{-1} \zeta_{21}^{2} \zeta_{10}^{-1} &-c^{-2}\\
&+\zeta_{21}^{-2} \zeta_1^{-1} \zeta_{10}^{-1} \zeta_{21}^{2} \zeta_{10} \zeta_1^{-1} \zeta_{10}^{-1} \zeta_{21}^{2} \zeta_{10}^{-1} \zeta_{21} &+c^{-2}\\
&+\zeta_{21}^{-2} \zeta_1^{-1} \zeta_{10}^{-1} \zeta_{21}^{2} \zeta_{10} \zeta_1^{-1} (\zeta_{10}^{-1} \zeta_{21}^{2})^{2} \zeta_{10} \zeta_1^{-1} \zeta_{10}^{-1}  &+c^{-2}\\
&+\zeta_{21}^{-2} \zeta_1^{-1} \zeta_{10}^{-1} \zeta_{21}^{2} \zeta_{10} \zeta_1^{-1} (\zeta_{10}^{-1} \zeta_{21}^{2})^{2} \zeta_{10} \zeta_1^{-1} \zeta_{10}^{-1} \zeta_{21}  &-c^{-2}\\
&-\zeta_{21}^{-2} \zeta_1^{-1} (\zeta_{10}^{-1} \zeta_{21}^{2} \zeta_{10} \zeta_1^{-1} \zeta_{10}^{-1} \zeta_{21}^{2})^{2} \zeta_{10} \zeta_1^{-1} \zeta_{21}^{-1} &-c^{-1}\\
&-\zeta_{21}^{-2} \zeta_1^{-1} (\zeta_{10}^{-1} \zeta_{21}^{2} \zeta_{10} \zeta_1^{-1} \zeta_{10}^{-1} \zeta_{21}^{2})^{2} \zeta_{10} \zeta_1^{-1} \zeta_{21}^{-2}&+c^{-1}
\end{array}\]

\[\begin{array}{ll|l}
\frac{\partial r_2}{\zeta_1} = &-\zeta_{21}^{-2} \zeta_{10} \zeta_{1}^{-1} &-c\\
&- \zeta_{21}^{-2} \zeta_{10} \zeta_{1}^{-1} \zeta_{10}^{-1} \zeta_{21}^{2} \zeta_{10} \zeta_{1}^{-1} &+c\\
&+ \zeta_{21}^{-2} (\zeta_{10} \zeta_{1}^{-1} \zeta_{10}^{-1} \zeta_{21}^{2})^{2} &+1\\
&- \zeta_{21}^{-2} (\zeta_{10} \zeta_{1}^{-1} \zeta_{10}^{-1} \zeta_{21}^{2})^{2} \zeta_{1} \zeta_{21}^{2} \zeta_{1}^{-1}&-1 \\
&+ \zeta_{21}^{-2} (\zeta_{10} \zeta_{1}^{-1} \zeta_{10}^{-1} \zeta_{21}^{2})^{2} \zeta_{1} \zeta_{21}^{2} \zeta_{1}^{-1} \zeta_{21}^{-2} \zeta_{10} &+c\\
&+ \zeta_{21}^{-2} (\zeta_{10} \zeta_{1}^{-1} \zeta_{10}^{-1} \zeta_{21}^{2})^{2} \zeta_{1} \zeta_{21}^{2} \zeta_{1}^{-1} \zeta_{21}^{-2} \zeta_{10} \zeta_{1} \zeta_{10}^{-1} \zeta_{21}^{-2} \zeta_{10}&-c\\
& & \\[-0.5em]
\frac{\partial r_2}{\zeta_{10}} = &+\zeta_{21}^{-2} &+1\\
&- \zeta_{21}^{-2} \zeta_{10} \zeta_{1}^{-1} \zeta_{10}^{-1} &+1\\
&+ \zeta_{21}^{-2} \zeta_{10} \zeta_{1}^{-1} \zeta_{10}^{-1} \zeta_{21}^{2} &-1\\
&- \zeta_{21}^{-2} \zeta_{10} \zeta_{1}^{-1} \zeta_{10}^{-1} \zeta_{21}^{2} \zeta_{10} \zeta_{1}^{-1} \zeta_{10}^{-1} &-1\\
&+ \zeta_{21}^{-2} (\zeta_{10} \zeta_{1}^{-1} \zeta_{10}^{-1} \zeta_{21}^{2})^{2} \zeta_{1} \zeta_{21}^{2} \zeta_{1}^{-1} \zeta_{21}^{-2} &+1\\
&- \zeta_{21}^{-2} (\zeta_{10} \zeta_{1}^{-1} \zeta_{10}^{-1} \zeta_{21}^{2})^{2} \zeta_{1} \zeta_{21}^{2} \zeta_{1}^{-1} \zeta_{21}^{-2} \zeta_{10} \zeta_{1} \zeta_{10}^{-1}&+1 \\
&+ \zeta_{21}^{-2} (\zeta_{10} \zeta_{1}^{-1} \zeta_{10}^{-1} \zeta_{21}^{2})^{2} \zeta_{1} \zeta_{21}^{2} \zeta_{1}^{-1} \zeta_{21}^{-2} \zeta_{10} \zeta_{1} \zeta_{10}^{-1} \zeta_{21}^{-2} &-1\\
&- \zeta_{21}^{-2} (\zeta_{10} \zeta_{1}^{-1} \zeta_{10}^{-1} \zeta_{21}^{2})^{2} \zeta_{1} \zeta_{21}^{2} \zeta_{1}^{-1} (\zeta_{21}^{-2} \zeta_{10} \zeta_{1} \zeta_{10}^{-1})^{2}&-1\\
& & \\[-0.5em]
\frac{\partial r_2}{\zeta_{21}} =& -\zeta_{21}^{-1} &+1\\
&- \zeta_{21}^{-2} &-1\\
&+ \zeta_{21}^{-2} \zeta_{10} \zeta_{1}^{-1} \zeta_{10}^{-1} &-1\\
&+ \zeta_{21}^{-2} \zeta_{10} \zeta_{1}^{-1} \zeta_{10}^{-1} \zeta_{21} &+1\\
&+ \zeta_{21}^{-2} \zeta_{10} \zeta_{1}^{-1} \zeta_{10}^{-1} \zeta_{21}^{2} \zeta_{10} \zeta_{1}^{-1} \zeta_{10}^{-1}&+1 \\
&+ \zeta_{21}^{-2} \zeta_{10} \zeta_{1}^{-1} \zeta_{10}^{-1} \zeta_{21}^{2} \zeta_{10} \zeta_{1}^{-1} \zeta_{10}^{-1} \zeta_{21} &-1\\
&+ \zeta_{21}^{-2} (\zeta_{10} \zeta_{1}^{-1} \zeta_{10}^{-1} \zeta_{21}^{2})^{2} \zeta_{1} &-1\\
&+ \zeta_{21}^{-2} (\zeta_{10} \zeta_{1}^{-1} \zeta_{10}^{-1} \zeta_{21}^{2})^{2} \zeta_{1} \zeta_{21} &+1\\
&- \zeta_{21}^{-2} (\zeta_{10} \zeta_{1}^{-1} \zeta_{10}^{-1} \zeta_{21}^{2})^{2} \zeta_{1} \zeta_{21}^{2} \zeta_{1}^{-1} \zeta_{21}^{-1} &+1\\
&- \zeta_{21}^{-2} (\zeta_{10} \zeta_{1}^{-1} \zeta_{10}^{-1} \zeta_{21}^{2})^{2} \zeta_{1} \zeta_{21}^{2} \zeta_{1}^{-1} \zeta_{21}^{-2} &-1\\
&- \zeta_{21}^{-2} (\zeta_{10} \zeta_{1}^{-1} \zeta_{10}^{-1} \zeta_{21}^{2})^{2} \zeta_{1} \zeta_{21}^{2} \zeta_{1}^{-1} \zeta_{21}^{-2} \zeta_{10} \zeta_{1} \zeta_{10}^{-1} \zeta_{21}^{-1} &-1\\
&- \zeta_{21}^{-2} (\zeta_{10} \zeta_{1}^{-1} \zeta_{10}^{-1} \zeta_{21}^{2})^{2} \zeta_{1} \zeta_{21}^{2} \zeta_{1}^{-1} \zeta_{21}^{-2} \zeta_{10} \zeta_{1} \zeta_{10}^{-1} \zeta_{21}^{-2}&+1
\end{array}\]

\vfill

\end{document}